\documentclass[12pt,oneside]{amsart}
\usepackage{amsmath, amscd}
\usepackage{hyperref}
\usepackage{graphicx}
\usepackage[english]{babel}

% \documentclass{amsart} says to use the AMS article document class.
% [12pt,oneside] says to use the 12pt and oneside options.
% If you don't want these options, just say \documentstyle{amsart}.

% After the document class declaration comes the preamble.
% The preamble begins here.

   % First we activate any packages that we may need.
   %
   % The amssymb package provides \mathbb and other
   % math symbols.  The amsmath package provides sophisticated math
   % constructions.  The amsthm package provides \theoremstyle and
   % the \proof environment.
   %
   % The amsmath and amsthm packages are automatically activated by
   % \documentclass{amsart}, so there is no need to activate them here.

   % Next we use \newtheorem to specify our theorem-like environments
   % (theorem, definition, etc.) and how to display and number them.
   %
   % Note: The \theoremstyle declarations affect the appearance of the

   % Theorems, Definitions, etc.

\usepackage{amsthm}

\makeatletter
\newtheorem*{rep@theorem}{\rep@title}
\newcommand{\newreptheorem}[2]{%
\newenvironment{rep#1}[1]{%
 \def\rep@title{#2 \ref{##1}}%
 \begin{rep@theorem}}%
 {\end{rep@theorem}}}
\makeatother

      \theoremstyle{plain}
      \newtheorem{theorem}{Theorem}[section]
      \newtheorem{lemma}[theorem]{Lemma}
      \newtheorem{corollary}[theorem]{Corollary}
      \newreptheorem{theorem}{Theorem}
     \newreptheorem{lemma}{Lemma}
     
          \newtheorem{prop}[theorem]{Proposition}
     
      \theoremstyle{definition}
      \newtheorem{definition}[theorem]{Definition}
      \newtheorem{exmp}{Example}[section]
      
      \theoremstyle{remark}
      \newtheorem{remark}[theorem]{Remark}

   % The preamble is also a good place to define new commands and macros.
   % This part of the preamble is strictly optional according to your taste.

      \newcommand{\R}{{\mathbb R}}
      \newcommand{\C}{{\mathbb C}}
      
      \newcommand{\Dc}{\text{Diff}_c}
      \newcommand{\D}{\text{Diff}}
      
        \newcommand{\s}{\mathbb{S}}

   % The following mysterious maneuver gets rid of AMS junk at the top
   % and bottom of the first page.
   
      \makeatletter
      \def\@setcopyright{}
      \def\serieslogo@{}
      \makeatother
   
% This ends the preamble.  We now proceed to the document itself.

\begin{document}

% First we specify the top matter (author, title, etc).
%
% Note: All of the top matter items are optional and can be omitted.
% But you will probably want to specify at least the author and title
% and perhaps an abstract.

   % author information

   % first author 

   \author[Sebastian Hurtado]{Sebastian Hurtado}
   \address{University of California, Berkeley, USA}
   \email{shurtado@math.berkeley.edu}

   % the address where the research was carried out
  % \address{University of California, Berkeley, USA}

   % current address, usually not needed because it is the same as the
   % regular address
  % \curraddr{Department of Mathematics, University of California,
     %Berkeley, CA}

  % \email{shurtado@math.berkeley.edu}
   
   % title

   \title[Continuity of homomorphisms of diffeomorphism groups]{Continuity of discrete homomorphisms of diffeomorphism groups}

   % Note that the short title for running heads goes in square
   % brackets.  This is optional.  The long title goes in curly
   % braces.  In the long title, line breaks are indicated by \\.

   % abstract (optional)
   
  \begin{abstract}
 
  Let $M$ and $N$ be two closed $C^{\infty}$ manifolds and let $\Dc(M)$ denote the group of $C^{\infty}$ diffeomorphisms  isotopic to the identity. We prove that any (discrete) group homomorphism between $\Dc(M)$ and $\Dc(N)$ is  continuous. We also show that a non-trivial group homomorphism $\Phi: \Dc(M) \to \Dc(N)$ implies that $\dim(M) \leq \dim(N)$ and give a classification of such homomorphisms when $\dim(M) = \dim(N)$.
    \end{abstract}

   % AMS subject classifications (used in AMS journals)
   \subjclass{Primary 00A30; Secondary 00A22, 03E20}

   % AMS keywords (used in AMS journals)
   %\keywords{infinite, seminar}

   % acknowledge support, etc

   %\thanks{This research was partially supported by NSF grant
     %DOA-123456789.}
   %\thanks{We would like to thank our colleagues for their helpful
     %criticism.}

   % dedication
   %\dedicatory{}

   % today's date, or fill in whatever date you prefer
   \date{\today}

% This ends the top matter information.
% We can now tell LaTeX to display the top matter.

   \maketitle

% Having displayed the top matter, we now proceed to the body of the
% article.

% The body of the article is divided into sections.
% Each section begins with a \section command.

\begin{section}{Introduction}\label{intro}

Let $M$ be a $C^{\infty}$ manifold and let $\Dc(M)$ be the group of $C^{\infty}$ diffeomorphisms of $M$ which are isotopic to the identity and compactly supported (if $M$ is noncompact).  A well known theorem of  Mather, Herman, Thurston and Epstein states that $\Dc(M)$ as a discrete group is simple (see \cite{B}).\\

In \cite{FIL}, Filipckewicz proved the following result:  Given two $C^{\infty}$ manifolds $M$ and $N$, the groups $\Dc(M)$ and $\Dc(N)$ are isomorphic (as discrete groups) if and only if the manifolds $M$ and $N$ are $C^{\infty}$ diffeomorphic. Moreover, if such an isomorphism exists should be given by conjugation with a fixed element of $\D(M)$. Filipckewicz's theorem implies in particular that  two different smooth exotic 7-spheres have different diffeomorphism groups. \\

The focus of this paper is the study of abstract group homomorphisms between groups of diffeomorphisms. We will make no assumptions on the continuity of the homomorphisms. Let $M$ and $N$ be two manifolds and let  $\Phi: \Dc(M) \to \Dc(N)$ be a group homomorphism. Observe that the simplicity of these groups imply that $\Phi$ is either trivial or injective.\\

Some examples of homomorphisms of the type $\Phi: \Dc(M) \to \Dc(N)$ are the following: \\

\begin{enumerate}

\item \textbf{Inclusion:} If $M \subset N$ is an open submanifold of $N$, the inclusion gives a homomorphism $\Phi: \Dc(M) \to \Dc(N)$.

\item \textbf{Coverings:}  If $ p:N \to M$ is a finite covering, every diffeomorphism $f \in \D_c(M) $ can be lifted to a diffeomorphism $\tilde{f}$ of $N$. These lifts in many cases give a group homomorphism. For example, this is the case when $p$ is an irregular cover (i.e. $\text{Deck}(M/N) = \{Id\}$), as in this case he lift $\tilde{f}$ of any diffeomorphism $f$ is unique.

\item\textbf{Bundles:} The unit tangent bundle $UT(M)$, the Grassmannian $Gr_k(M)$, consisting of $k$-planes in $T(M)$, product bundles, symmetric products and other bundles over $M$ admit a natural action of $\D_c(M)$.\\

\end{enumerate}

Observe that all the homomorphisms described above  are in some sense continuous maps from $\Dc(M)$ to $\Dc(N)$ in their usual $C^{\infty}$ topologies. The main result of this paper shows that this is always the case when $M$ is closed. Before stating our main result we need the following definition:\\

\begin{definition}

For any compact subset $K \subseteq M$, let $\D_K(M)$ denote the group of diffeomorphisms in $\Dc(M)$ supported in $K$. A group homomorphism $\Phi: \Dc(M) \to \Dc(N)$ is \emph{weakly continuous} if for every compact set $K \subseteq M$, the restriction $\Phi|_{\D_K(M)}$ of $\Phi$ to $\D_K(M)$ is continuous in the weak topology, see \ref{norms}.%%%%REFERE TO WEAK TOP.%%%%

\end{definition}

Our main theorem is the following:\\

\begin{theorem}\label{1}

Let $M$ and $N$ be $C^{\infty}$ manifolds. If $\Phi:  {\Dc}(M) \to {\Dc}(N)$ is a discrete group homomorphism, then $\Phi$ is weakly continuous.

\end{theorem}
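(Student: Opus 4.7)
The plan is to argue by contradiction. Suppose $\Phi$ fails to be weakly continuous at the identity; then there is a sequence $(f_n)$ in $\Dc(M)$ converging to the identity in the $C^\infty$ topology whose images $\Phi(f_n) \in \Dc(N)$ do not converge to the identity. After passing to a subsequence, one may assume $f_n \to e$ at any prescribed rate, and by compactness of the closed manifold $N$, that there exist points $p_n \in N$ and $\varepsilon > 0$ with $d_N(\Phi(f_n)(p_n), p_n) \geq \varepsilon$ for every $n$. The entire task is to derive a contradiction from this data.

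The central device would be an infinite-product/distortion trick, in the spirit of Mather, Thurston, and Militon. Using fragmentation in $M$, each $f_n$ can be written as a bounded product of diffeomorphisms supported in small balls. Conjugating by suitable fixed elements of $\Dc(M)$, I would replace $f_n$ by a conjugate $\tilde f_n$ whose support lies in a ball $B_n$, where the $(B_n)$ are pairwise disjoint and shrink rapidly to a single accumulation point in $M$. The fast decay of $f_n \to e$ ensures that the infinite product $F := \prod_n \tilde f_n$ converges in the $C^\infty$ topology and defines an honest element of $\Dc(M)$. Next I would fix a diffeomorphism $g \in \Dc(M)$ that shifts $B_n$ onto $B_{n+1}$ in a coherent way; then commutators with $g$ re-express tails of $F$, and iterating yields an algebraic identity writing each $\tilde f_n$ as a word of bounded length in the fixed elements $F, g$ and finitely many auxiliary diffeomorphisms $g_1,\dots,g_k$ that do not depend on $n$. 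Applying $\Phi$, the sequence $\Phi(\tilde f_n)$ is then built from a bounded-length word in $\Phi(F), \Phi(g), \Phi(g_1),\dots,\Phi(g_k)$, and one argues that the structure of this word, combined with the shifting relation, forces $\Phi(\tilde f_n) \to e$, contradicting the non-trivial displacement at $p_n$.

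The main obstacle is the algebraic step: producing a genuine bounded-length decomposition of $\tilde f_n$ in fixed generators, i.e.\ a portmanteau-style lemma. This requires careful coordination of supports together with uniform $C^\infty$ estimates so that the infinite product genuinely lies in $\Dc(M)$, and then an algebraic identity that extracts the individual $\tilde f_n$ using only commutators with the shifting element $g$. Closedness of $N$ enters in two places: it makes displacement a well-defined quantitative measurement, and it ensures that $\Dc(N)$ is a reasonable target in which sequences of $\Phi(\tilde f_n)$ built from finitely many fixed letters can be controlled and pushed toward the identity. The scheme's success thus hinges on finding the right support geometry to run the telescoping-commutator identity without losing regularity.
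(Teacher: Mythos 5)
Your first two paragraphs are, in effect, an outline of a proof of Militon's portmanteau theorem, which is precisely the external input the paper cites (Theorem \ref{mil}) rather than re-derives. That part of your plan is in the right spirit, though the conclusion you draw from it is stated too strongly: the construction does not produce a decomposition of $\tilde f_n$ of \emph{bounded} length in fixed generators, but one of length at most $k_n$, where $k_n$ can (and generally does) diverge; it depends only on the rate $\epsilon_n$ at which $f_n\to e$, not on the $f_n$ themselves. What this controlled growth actually buys, via the chain rule (the paper's Lemma \ref{m1}), is a uniform bound on all $C^r$-norms of $\Phi(\tilde f_n)$, hence by Arzel\`a--Ascoli a subsequence with $\Phi(\tilde f_n)\to A$ in $C^\infty$, and an averaging argument then shows $A$ is an isometry of some Riemannian metric on $N$ (Lemma \ref{m2}). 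None of this forces $A=e$.

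That is the genuine gap. Your final step asserts that ``the structure of this word, combined with the shifting relation, forces $\Phi(\tilde f_n)\to e$,'' but there is no mechanism in what you wrote that rules out $A$ being a nontrivial isometry. For instance, a priori one could have $\Phi(\tilde f_n)$ converging to an irrational rotation of a sphere factor of $N$, with displacement bounded below just as you require, and nothing in the bounded-word-length structure contradicts this. Almost all of the paper's actual work (Sections \ref{conti}, \ref{blowups}, \ref{proof}) is devoted to exactly this: given the candidate nontrivial isometry $A$ and the closed group $H=\overline{\langle A\rangle}$, one runs an induction on $\dim N$, blowing up along $\mathrm{fix}(H)$ when $H$ has fixed points, passing to the quotient $N/H$ when $H$ is infinite and acts freely, and handling the finite-free case by building an infinite group of isometries $\hat H$ from iterated lifts. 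Each of these steps reduces the dimension and uses the gluing theorem of Parkhe to recover smoothness after blow-up. Without some replacement for this dimensional descent, your scheme cannot conclude $A=e$ and the contradiction does not materialize.

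A secondary remark: even the fragmentation-and-shrinking-balls setup needs care. Militon's statement requires all $h_n$ to be supported in a fixed compact set and that they converge to the identity at a prescribed rate $\epsilon_n$ depending only on $M$; after fragmenting and conjugating into shrinking balls, you must still verify that the resulting $\tilde f_n$ satisfy those decay hypotheses, which is not automatic. This is a technical point, but it matters, and the paper avoids it by invoking Militon's theorem for the original sequence on a fixed compact support rather than rebuilding the machinery.
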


It is worth pointing out that there is no assumption on the dimensions of the manifolds. As a consequence of Theorem \ref{1}, we will obtain a classification of all possible homomorphisms in the case when $\dim(M) \geq \dim(N)$.\\

Mann \cite{M} gave a classification of all the non-trivial homomorphisms $\Phi:  {\Dc}(M) \to {\Dc}(N)$ in the case that $N$ is 1-dimensional. She showed using one dimensional dynamics techniques (Kopell's lemma, Szekeres theorem, etc.) that $M$ must be one dimensional and that these homomorphisms are what she described as ``topologically diagonal".\\

In the case when $M = N = \mathbb{R}$, \emph{topologically diagonal} means that $\Phi$ is given in the following way:  Take a  collection of finite disjoint open intervals $I_j \subset \R$ and diffeomorphisms $f_j : M \to I_j $.  For each diffeomorphism $g \in \Dc(\R)$ the action of $\Phi(g)$ in the interval $I_j$ is given by conjugation by $f_j$, i.e. $ \Phi(g)(x) = f_igf_i^{-1}(x)$ for every $x \in I_i$. The action is defined to be the identity everywhere else.\\

We will prove a generalization of the previous result:

\begin{theorem}\label{2}

Let $M$ and $N$ be $C^{\infty}$ manifolds, let $N$ be closed and $\dim(M) \geq \dim(N)$. If $\Phi:{\Dc}(M) \to {\Dc}(N)$ is a non-trivial homomorphism of groups, then $\dim(M) = \dim(N)$ and $\Phi$ is ``extended topologically diagonal".

\end{theorem}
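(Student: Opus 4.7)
The starting point is Theorem~\ref{1}, which guarantees that $\Phi$ is weakly continuous, so that for each open ball $U\subset M$ with compact closure the restriction $\Phi|_{\Dc(U)}$ is a continuous homomorphism $\Dc(U)\to\Dc(N)$. Since $\Dc(U)$ is simple, this restriction is either trivial or injective. A fragmentation argument (every element of $\Dc(M)$ is a product of elements supported in small balls) shows that if $\Phi|_{\Dc(U)}$ were trivial for all small enough $U$, then $\Phi$ itself would be trivial, contrary to hypothesis; so I fix a small ball $U$ on which $\Phi|_{\Dc(U)}$ is injective.

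The heart of the argument is a support analysis. For each open ball $U\subset M$ set
\[
V_U := \overline{\bigcup_{g\in\Dc(U)} \operatorname{supp}(\Phi(g))}\subset N,
\]
a closed $\Phi(\Dc(U))$-invariant subset on whose interior $\Phi(\Dc(U))$ acts transitively. The crucial rigidity input is commutativity: if $U_1,U_2\subset M$ are disjoint then $\Phi(\Dc(U_1))$ and $\Phi(\Dc(U_2))$ commute pointwise. I would argue that two such commuting actions cannot have overlapping regions of transitivity, forcing $\operatorname{int}(V_{U_1})\cap\operatorname{int}(V_{U_2})=\nil$. Shrinking $U\ni p$ to a point then assigns to each $p\in M$ a closed ``fiber'' $F_p\subset N$, and these fibers are pairwise disjoint.

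To move from the support picture to a dimension comparison, I would pass to one-parameter subgroups $\{\exp(tX)\}_{t\in\R}$ coming from compactly supported smooth vector fields on $M$. Continuity of $\Phi$ converts these to one-parameter subgroups of $\Dc(N)$, producing an injective continuous Lie algebra homomorphism $\psi\colon\operatorname{Vec}_c(M)\to\operatorname{Vec}(N)$. By Pursell--Shanks type rigidity adapted to homomorphisms (not only isomorphisms), such $\psi$ must be of geometric origin: on an open set $W\subset N$ there exists a smooth submersion $\pi\colon W\to M$ with $\psi(X)=\pi^{\ast}X$. This immediately yields $\dim N\geq\dim M$; combined with the hypothesis $\dim M\geq\dim N$ it forces $\dim M=\dim N$ and $\pi$ to be a local diffeomorphism.

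Globalizing these local models, the disjointness of interior supports lets one decompose the nontrivial locus of $\Phi$ in $N$ into finitely many disjoint open pieces $V_1,\ldots,V_k$, each equipped with a local diffeomorphism $\pi_i\colon V_i\to M$ so that $\Phi(g)|_{V_i}$ is the lift of $g$ via $\pi_i$; this is the extended topologically diagonal form. The principal obstacle I foresee is the commutation-to-disjointness assertion used above: without the one-dimensional tools of Kopell and Szekeres employed in Mann's proof, a careful dynamical argument is needed to show that commuting actions of diffeomorphism groups cannot share a region of transitivity, and I expect this is where the bulk of the technical work lies.
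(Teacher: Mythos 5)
Your skeleton matches the paper's at the coarsest level: invoke Theorem~\ref{1} to get weak continuity, run a support/fiber analysis assigning to each $x\in M$ a ``fiber'' in $N$, then globalize to a covering map. But both of your two key technical steps diverge from what the paper actually does, and both contain genuine gaps.

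The commutation-to-disjointness step has a circularity. You posit that $\Phi(\Dc(U))$ acts transitively on $\operatorname{int}(V_U)$ and deduce disjointness of the $V_{U_i}$ from commutativity of the two actions. But such transitivity is not a priori available while $\dim M<\dim N$ is still in play: for $\Phi(g)=g\times\mathrm{id}$ into $\Dc(M\times S^1)$, $\Phi(\Dc(U))$ is nowhere near transitive on $\overline{U}\times S^1$; and even in the honest extended topologically diagonal model with several pieces $\rho_i$, $\Phi(\Dc(U))$ cannot mix the different $\rho_i(U)$, so it does not act transitively on $\operatorname{int}(V_U)$. The paper's route (Lemma~\ref{inj1}) avoids all of this by proving something finer: for $n=\dim N$ one cannot have $n+1$ pairwise disjoint balls $U_1,\dots,U_{n+1}\subset M$ with all of $\Phi(G_{U_i})$ failing to fix a common point $p\in N$. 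Its proof uses no transitivity; it extracts one-parameter subgroups (Corollary~\ref{flux}), integrates the resulting commuting vector fields via Frobenius to obtain a foliation near $p$, and contradicts the simplicity of $G_{U_{n+1}}$ when its image is forced to be abelian on a leaf. Disjointness of the fibers $S_x\cap S_y=\nil$ for $x\neq y$ is then deduced (Lemma~\ref{inj2}) by combining Lemma~\ref{inj1} with a local section device (Lemma~\ref{rn}) to build a continuous injection from an open set of $M^k$ into $N$, which simultaneously gives $\dim M\le\dim N$ and $k=1$. Your approach, as written, needs the dimension equality to justify transitivity, yet is supposed to establish it.

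The Pursell--Shanks step is a second unsupported ingredient. You appeal to a ``Pursell--Shanks type rigidity adapted to homomorphisms (not only isomorphisms)'' to extract a local submersion $\pi\colon W\to M$ pulling back vector fields. No such theorem is cited, and it does not follow formally from the isomorphism version; this is a real missing piece, and it is exactly the kind of linearization that the paper goes out of its way not to assume. The paper instead defines $\pi\colon N'\to M$ concretely by $\pi(p)=x$ iff $p\in S_x$ (well-defined by Lemma~\ref{inj2}), builds a continuous local inverse $\rho$ from Lemma~\ref{rn}, and upgrades $\rho$ to a $C^\infty$ local diffeomorphism using the Montgomery--Zippin theorem (Theorem~\ref{mz}, via Corollary~\ref{flux}) applied along flow lines. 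That hands-on construction replaces the Lie-algebra rigidity your proposal leans on. In short, the two load-bearing steps of your sketch are, respectively, circular and unproved; the paper's Lemmas~\ref{inj1}, \ref{inj2}, \ref{rn} and Corollary~\ref{flux} are the machinery you would need to fill them.
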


The definition of extended topologically diagonal is given in Subsection \ref{etd} and is a generalization of what we previously described as topologically diagonal by allowing the possibility of taking finite coverings of $M$ and embedding them into $N$, see definition \ref{etd}.\\

Observe that as a corollary of Theorem \ref{2} we obtain another proof of Filipckewicz's theorem. The result also gives an affirmative answer to a question of Ghys (see \cite{G}) asking whether a non-trivial homomorphism $\Phi: \Dc(M) \to \Dc(N)$ implies that $\dim(N) \geq \dim(M)$.\\

The rigidity results we obtain can be compared to analogous results obtained by Aramayona-Souto \cite{SA}, who proved similar statements for homomorphisms between mapping class groups of surfaces and to  deep results of Margulis about rigidity of lattices in linear groups. (See \cite{Z}, Chapter 5).

 \begin{subsection}{Ingredients and main idea of the proof}

The key ingredient in the proof of Theorem \ref{1} is a theorem of E. Militon. This theorem says roughly that given a sequence $h_n$ converging  to the identity in $\Dc(M)$ sufficiently fast (a geometric statement), one can construct a finite subset $S$ of $\Dc(M)$, such that the group generated by $S$ contains the sequence $h_n$ and such that the word length $l_S(h_n)$ (See \ref{product}) of each diffeomoprhism $h_n$ in the alphabet $S$ is bounded by a sequence $k_n$ not depending on $h_n$ (an algebraic statement), see \ref{mil} for the correct statement. Militon's theorem is a generalization of a result of Avila \cite{A} about $\D(S^1)$ and is related to results of Calegari-Freedman \cite{CF}.\\ 

 Militon's result is strongly related to the theme of distortion elements in geometry group theory (See Gromov \cite{Gro}, Chapter 3). An element $f$ of a finitely generated group $G$ with generating set $S$ is called distorted if $\underset{n \to \infty}{\lim} \frac{l_S(f^n)}{n} = 0$. \\
 
 Consider for example the group $BS(2,1)= \{a,b | bab^{-1} = a^2 \}$. This group can be thought as the group generated by the functions $a: x \to x+1$ and $b: x \to 2x$ in $\D(\R)$. Observe that $b^{n}ab^{-n} = a^{2^{n}}$ for every $n$ and therefore $a$ is distorted (In some sense $a$ is ``exponentially" distorted). \\

An element $f \in \Dc(M)$  is said to be recurrent in $\Dc(M)$ if $f$ satisfies that $\lim\inf d_{C^{\infty}}(f^{n}, \text{Id}) = 0$, for example an irrational rotation in $S^1$ is recurrent (See \ref{norms}). A corollary of Militon's theorem is that any recurrent element in $\Dc(M)$ is  distorted in some finitely generated subgroup of $\Dc(M)$. Even more, it implies that such an element is``arbitrarily distorted" in the sense that $f$ could be made as distorted as one wants (See \ref{CF} for a precise definition). For a nice exposition of the concept of distortion in transformation groups and the definition of the different types of distortion, see \cite{CF}. \\

Observe that distorted elements are preserved under group homomorphisms and therefore if one wants to understand the existence of a group homomorphism, it might be fruitful to understand the distortion elements of the groups involved in the homomorphism. One example of this approach can be seen in the work of Franks and Handel \cite{FH}. They proved that any homomorphism from a large class of higher rank lattices $\Gamma$ (For example $SL_3(\mathbb{Z})$) into ${\D}_{\mu}(S)$ (the group of diffeomorphisms of a surface preserving a measure $\mu$) is finite using the fact that $\Gamma$ has distorted elements and showing that any distorted element $f \in \D_{\mu}(S)$ fixes the support of the measure $\mu$. For a more concrete statement of this and a clear exposition, see \cite{F}.\\

The main idea of how to use Militon's theorem to obtain our rigidity results is illustrated in section \ref{motivation} with a motivating example that encloses the main idea of Theorem \ref{1} and is the heart of this paper. 

\end{subsection}

\begin{subsection}{Outline}

The paper is divided as follows. In section \ref{main}, we will use Militon's theorem to prove Lemma \ref{m2}, which is the main tool to prove Theorem \ref{1}. In section \ref{conti}, we prove Theorem \ref{2} assuming that $\Phi$ is weakly continuous, this section is independent of the other sections. In section \ref{blowups}, we establish some general facts about two constructions in group actions on manifolds that we will use for the proof of Theorem \ref{1}. In section \ref{proof}, we show that $\Phi$ is weakly continuous using the results on section \ref{main} and the facts discussed in section \ref{blowups}, finishing the proof of theorems \ref{1} and \ref{2}. Finally, in section \ref{remarks}, we end up with some questions and remarks related to this work.

\end{subsection}

\begin{subsection}{Acknowledgements}

The author would like to thank the referee, Kathryn Mann, Benson Farb and especially Emmanuel Militon for useful comments and corrections on a previous version of this paper. The author is in debt with his advisor, Ian Agol, for his help, encouragement and support.

\end{subsection}

\end{section}

\begin{section}{Main technique}\label{main}

This is the most important section of the chapter, we will prove Lemma \ref{m2} which is a slightly weaker version of Theorem \ref{1}. The main ingredient in the proof of Lemma \ref{m2} is Theorem \ref{mil}. We will discuss a motivating example in subsection \ref{motivation} which avoids some necessary technicalities in the proof of Lemma \ref{m2} but contains the central point of our discussion.

\begin{subsection}{Notation}\label{defweakt}

Throughout this section and for the rest of this chapter we make use of the following notation:  $M$ and $N$ denote $C^{\infty}$ manifolds, $\Dc(M)$ will be the group of  $C^{\infty}$ diffeomorphisms which are compactly supported and isotopic to the identity. The letter $\Phi$ will always be a discrete group homomorphism $\Phi: \Dc(M) \to \Dc(N)$ unless stated otherwise. For a compact set $K$, we let $\D_K(M)$ be the subgroup of $\Dc(M)$ consisting of diffeomorphisms supported in a compact set $K$.\\

We will consider $\Dc(M)$ as a topological space endowed with the weak topology, this topology is metrizable and we will denote by $d$ any metric that induces such topology. For more information about the weak topology see \ref{norms}.\\

\end{subsection}

We are now in position of stating the main lemma of this section.

\begin{lemma}\label{m2}

Let $M$ and $N$ be $C^{\infty}$ manifolds, let $K$ be any compact subset of $M$ and let $\Phi:  {\Dc}(M) \to {\Dc}(N)$ be a discrete group homomorphism. Suppose $h_n$ is a sequence in ${\D_K}(M)$ such that $$ \lim_{n \to \infty} d_{C^{\infty}}(h_n, \text{Id}) = 0$$ Then $\{\Phi(h_n)\}$ contains a  subsequence converging to a diffeomorphism $H$, which is an isometry for a $C^{\infty}$ Riemannian metric on $N$.

\end{lemma}

Observe that the isometry $H$ in Lemma \ref{m2} is necessarily homotopic to the identity, and therefore if the manifold $N$ does not admit a metric with isometries homotopic to the identity (as for example is the case when $N$ is a closed surface of genus $g \geq 2$), the previous theorem implies the continuity of the homomorphism $\Phi$ restricted to $\D_K(M)$.\\

Before stating Militon's theorem, we will need the following standard definition:\\

\begin{definition}\label{product}

Let G be a finitely generated group and let $S$ be a finite generating set. We assume that $S= S^{-1}$. Given an element $g \in G$, the word length $l_{S}(g)$ of $g$ in $s$ is defined as the minimum integer $n$ such that $g$ can be expressed as a product $$g = s_1s_2...s_n$$ where each $s_i \in S$. In a more technical language, $l_{S}(g)$ is the distance of $g$ to the identity in the Cayley graph corresponding to $S$.

\end{definition}

We are now in position of stating Militon's theorem:

\begin{theorem}\label{mil}

Let $M$ be a compact manifold. There exist two sequences $\{\epsilon_n\} \to 0$ and $\{k_n\}$ of positive real numbers  such that for any sequence $h_n$ of diffeomorphisms in ${\Dc}(M)$ satisfying: $$ d_{C^{\infty}}(h_n, \text{Id}) \le \epsilon_n$$ there exists a finite set $S \subset  {\Dc}(M)$ such that:

\begin{enumerate}

\item $h_n$ belongs to the subgroup generated by $S$
\item $l_{S}(h_n) \leq k_n$

\end{enumerate}

\end{theorem}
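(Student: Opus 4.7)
The plan is to follow the general strategy (due to Avila in dimension one, and extended by Calegari--Freedman and Militon) of ``encoding'' an entire rapidly decaying sequence of small diffeomorphisms inside a single master generator, so that each term can be recovered by a bounded number of conjugations. First, I would apply a fragmentation lemma of Palais type: after choosing $\epsilon_n$ small enough, every $h_n$ with $d(h_n,\mathrm{Id}) \le \epsilon_n$ splits as a product $h_n = h_n^{(1)}\cdots h_n^{(p)}$, where $p$ depends only on a fixed finite atlas of the compact manifold $M$ and each $h_n^{(i)}$ is supported in a coordinate ball $B_i$ and still satisfies $d(h_n^{(i)},\mathrm{Id}) \le C\epsilon_n$ in $C^r$ for all $r$. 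This reduces the statement to the case where all the $h_n$ are supported in a single ball $B \cong \mathbb{R}^m$, at the cost of multiplying the eventual $k_n$ by the fixed constant $p$.

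Second, inside $B$ I would pick an infinite family of pairwise disjoint ``scaled copies'' $B_n \subset B$ shrinking geometrically toward a common fixed point $x_0$, together with diffeomorphisms $\alpha, \beta \in \Dc(B)$ such that $\alpha(B_n) = B_{n+1}$ (so $\alpha^n$ conjugates $B_0$ onto $B_n$) and $\beta$ acts as a ``selector'' that isolates $B_0$ from the other $B_n$'s. The sequences $(\epsilon_n)$ and $(k_n)$ will be calibrated so that $\epsilon_n$ decays fast enough to compensate, in every $C^r$ seminorm simultaneously, for the distortion produced by conjugation by $\alpha^n$.

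Third, I would build one master diffeomorphism $F$, supported on $\bigsqcup_n B_n \cup \{x_0\}$, whose restriction to $B_n$ is the image of $h_n$ under the canonical diffeomorphism $B \to B_n$ given by iterating $\alpha$. Because $\epsilon_n$ has been chosen to decay faster than the reciprocal of every derivative of $\alpha^n$, all $C^r$ norms of the individual pieces of $F$ tend to zero, so $F$ extends smoothly across $x_0$ with vanishing infinite jet and hence belongs to $\Dc(M)$. The finite generating set is then $S := \{\alpha,\beta,F\}^{\pm 1}$, possibly augmented by one or two auxiliary cutoff generators.

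Fourth, to express $h_n$ as a short word in $S$, I would form $\alpha^{-n} F \alpha^n$, which places the $n$-th copy of $h_n$ back inside $B_0$ (with the remaining copies still supported in balls disjoint from $B_0$), and then use a bounded-length word in $\beta$ and its conjugates by $\alpha^k$ to suppress the pieces indexed by $k \neq n$. The resulting expression has word length $O(n)$, so one may take $k_n := Cn$ for a constant $C$ depending only on the construction. The main obstacle, and where the argument becomes most delicate, is choosing $\epsilon_n$ so that the glued map $F$ is genuinely $C^\infty$ at the accumulation point $x_0$: by the Fa\`a di Bruno formula, controlling $\|F|_{B_n}\|_r$ requires $\epsilon_n$ to beat an expression involving all derivatives of $\alpha^n$ up to order $r$, and since $r$ is arbitrary this forces $\epsilon_n$ to tend to $0$ faster than any sequence prescribed in advance from finitely many derivatives. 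Carrying out this simultaneous smoothness control in an arbitrary compact manifold, rather than on $S^1$, is precisely what separates Militon's theorem from its one-dimensional precursors.
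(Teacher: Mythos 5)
The paper does not prove this theorem; it imports it from Militon's work and offers only a two-step sketch: (i) reduce, via fragmentation and a perfectness argument, to the case where each $h_n$ is a single commutator $h_n=[a_n,b_n]$ of diffeomorphisms near the identity supported in a small ball, with the number of commutators needed depending only on $M$; and (ii) encode the two factor sequences $(a_n)$ and $(b_n)$ in two master diffeomorphisms and use algebraic manipulations to recover each $h_n$ from a short word. Your sketch captures the fragmentation step and the shrinking-ball encoding with the super-fast decay of $\epsilon_n$ needed for smoothness of the master at $x_0$, but it replaces the commutator reduction (i) with a plain reduction to ball-supported diffeomorphisms, and that change is fatal at the isolation stage.

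The gap sits in your step four. After conjugating your single master $F$ by $\alpha^{-n}$, the resulting diffeomorphism has a rescaled $h_n$ on $B_0$ but also carries a nontrivial rescaled copy of $h_m$ for every $m>n$ on infinitely many translated balls accumulating at $x_0$. You propose to ``suppress the pieces indexed by $k\neq n$'' by a bounded word in $\beta$ and $\alpha$-conjugates of $\beta$; no finite word of group operations can cancel infinitely many independent nontrivial pieces, and there is no reason these tails telescope away. The device that actually annihilates the tail is the elementary identity $[u,v]=\mathrm{id}$ whenever $u$ and $v$ have disjoint supports. Once $h_n=[a_n,b_n]$, one encodes $(a_n)$ in a master $A$ and $(b_n)$ in a second master $B$, and a word such as $[\alpha^{-n}A\alpha^n,\ \tau(\alpha^{-n}B\alpha^n)\tau^{-1}]$, where $\tau$ fixes $B_0$ and displaces every other ball into a region disjoint from all the $B_k$, has its two arguments overlapping only over $B_0$; the commutator then equals $[a_n,b_n]=h_n$ there and is automatically trivial everywhere else. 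This is why the construction needs two master diffeomorphisms rather than your one $F$, and why the preliminary reduction of $h_n$ to a commutator of small elements is not an optional normalization but the mechanism that makes the extraction possible at all.
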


\begin{remark}

The previous theorem is also true in the case where $M$ is not compact and $\Dc(M)$ is replaced with $\D_K(M)$ for a compact subset $K$ of $M$.

\end{remark}

\begin{remark}

The set $S$ depends on the sequence $h_n$, but the sequences $\{\epsilon_n\}$ and $\{k_n\}$ are independent of the choice of $h_n$.

\end{remark}

This result is a generalization of a Theorem of Avila \cite{A}, who proved the same result for  $\Dc(\s^1)$ case. The proof of the theorem is related to a construction of Calegari-Freedman \cite{CF}, who showed that an irrational rotation of $\s^2$ is ``arbitrarily distorted" in ${\Dc}(\s^2)$.\\

For sake of completeness, we give a rough summary of how Theorem \ref{mil} is proved in \cite{A}, \cite{Mil}. The proof of Theorem \ref{mil} has two steps:  The first step is to show that an element in $\Dc(M)$ close to the identity can be written as a product of commutators of elements  which are close to the identity,  this step is  proved using a KAM theory technique. The first step allows one to reduce the general case to the case where all the $f_n$'s are commutators supported in  small open balls. The second step consist of using a finite set of diffeomorphisms  to encode the sequence $\{f_n\}$ into a finite $S$, this step uses some clever algebraic tricks similar to the ones used in the proof of the simplicity of $\Dc(M)$ by Thurston (see \cite{B}, p. 23). The tricks depend heavily on the fact that the $f_n$'s are products of commutators.\\

To give a hint of how powerful Theorem \ref{mil} can be for our purposes, observe the following: Let $h_n$ be a sequence in $\Dc(M)$ supported in a compact set $K$ and converging to the identity in $\D_c(M)$ as in Theorem \ref{mil}.  As a consequence of Theorem \ref{mil},  the sequence $\{h_n\}$ is generated by a finite set of diffeomorphisms $S$ and therefore the set $\Phi(S)$ generates a group containing $\{\Phi(h_n)\}$. As a consequence, there exists a compact set $K' \subset N$ (Namely, the union of the support of the generators) such that every element of the sequence  $\{\Phi(h_n)\}$ is supported in $K'$. This statement is not easy to deduce by elementary means.\\

The following example is fundamental and contains the main idea in the proof of Lemma \ref{m2}:

\begin{subsection}{Motivating example}\label{motivation}

Let $\s^2$ be the 2-sphere and let $\Phi: \Dc(\s^2) \to \Dc(\s^2)$ be a homomorphism of groups. As a consequence of Theorem \ref{2} we are going to show that $\Phi$ is induced by conjugation via an element of $\D(\s^2)$. In particular, $f$ and $\Phi(f)$ turned out being conjugates as diffeomorphisms for every $f \in \Dc(\s^2)$. In the next proposition we show that this is true for the most basic diffeomorphisms of $\s^2$, namely rotations in $SO(3)$.\\

\begin{prop}%%%PROPOSITION IN PAPER%%%%

Let $\s^2$ be the 2-sphere and let $\Phi: \Dc(\s^2) \to \Dc(\s^2)$ be a group homomorphism. For any rotation $R$ in $SO(3)$, we have that $C := \Phi(R)$  preserves a Lipschitz metric $g$. If the metric $g$ happens to be $C^{\infty}$ then $C$ is conjugate to a rotation in $\Dc(\s^2)$.   

\end{prop}

\begin{proof}(Sketch) First consider the case when $R$ is a finite order rotation.  In this case, the diffeomorphism $C$ is an element of finite order, let say of order $n$. If we take any Riemannian metric $g'$ in $\s^2$ and we consider the average metric $g = \sum_{i=0}^{n-1} (C^{i})^{*}g'$, it is easy to check that $g$ is a Riemannian metric that is invariant under $C$ . It follows from the uniformization theorem of Riemann surfaces that $g$ is conformally equivalent to the standard metric in $\s^2$. This implies that $C$ is conjugate to a conformal map of finite order of $\s^2$ (an element of $PSL_2(\C)$) and this easily implies that $C$ is conjugate to a finite order rotation in SO(3).\\

Now consider the case when $R$ has infinite order (an irrational rotation). We want to imitate the proof of the finite order case:  Let $g'$ be an arbitrary Riemannian metric and consider the sequence of  average metrics  $g_n = \frac{1}{n}\sum_{i=0}^{n} ({C}^i)^{*}g'$. If an appropriate subsequence of $\{g_n\}$ converges to a metric $g$, the metric $g$ would be invariant under $C$ and we would be able to conclude that $C$ is conjugate to a rotation arguing in the same way as in the finite order case. The problem is that to obtain such a convergent subsequence, we need the sequence $\{(C^{n})^{*}(g')\}$ to not behave badly.\\

The idea then is to obtain bounds in the derivatives of powers of $C$ which would assure that the sequence of average metrics $g_n$ have a nice convergent subsequence. Therefore, we are going to show that $\|D(C^n)\| \leq K$, for some fixed constant $K$ and every integer $n$.\\ 

Suppose such a constant $K$ does not exist, then there is a subsequence $\{C^{m_n}\}$ such that $\|D(C^{m_n})\| \to \infty$. Passing through a subsequence, we can assume that the sequence of rotations $R^{m_n}$ is convergent,  let's assume it converges to the identity (if $R^{m_n}$ converges to another rotation, a similar argument works). Furthermore, we can assume that $\|D(C^{m_n})\| \geq e^{e^{k_n}}$ and that $d_{C^{\infty}}(R^{m_n}, \text{Id}) \leq \epsilon_n $, for the constants $\epsilon_n$ and $k_n$ in Theorem \ref{mil}.\\

Using Theorem \ref{mil}, we can find a finite set $S = \{s_1,s_2  \dots s_l\}$ such that  $l_{S}(R^{m_n}) \leq k_n$ for every $n$ and therefore we can express $R^{m_n} = \prod_{j=1}^{l_n} s_{i_j}$  for some  integers $l_n \leq k_n$. If we take a constant $L$ such that for each generator $\|D(\Phi(s_m))\| \leq L$ for every $m \leq l$, we obtain: $$ g(n) = \|DC^{m_n}\| =  \| D \big( \prod_{j=1}^{l_n}\Phi(s_{i_j}) \big)\|  \leq \prod_{j=1}^{l_n}\|D(\Phi(s_{i_j}))\|    \leq L^{k_n}$$
 
 This give us a contradiction: On the one hand  the sequence $\|D(C^{m_n})\|$ diverges superexponentially (greater than $e^{e^{k_n}}$), and on the other hand, it diverges at most exponentially (less than $L^{k_n}$).\\
 
 It is not difficult to show this imply  $g_n$ has a convergent subsequence converging to a metric that a priory might be not smooth, see the proof of \ref{m2} for more details.\\

\end{proof}

To promote the metric $g$ in the previous proposition to an actual $C^{\infty}$ metric we need to have control over higher derivatives of diffeomorphisms. We would also need an analog for higher order derivatives of the the fact that for any $f, g \in \D(\s^2)$, the inequality $ \|D(f\circ g)\| \leq \|D(f)\|\|D(g)\|$ holds. The purpose of the next two subsections is  to set up the right framework to achieve these tasks.

\end{subsection}

%%%%%%%%%%

\begin{subsection}{Norms of derivatives}\label{norms}

%%WHOSE PURPOSE-GRAMMAR??%%
In the next two sections we will define ``norms" $\|.\|_r$ of any element in $\Dc(M)$ whose purpose is to measure how big the $r$-derivatives of a diffeomorphism are. We are going to use the word ``norm'' to refer to them but the functions $\|.\|_r$ are not in any sense related to the various definitions of norms in the literature. This and the next subsection are technical and might be a good idea to skip them in a first read. If the reader decides to do that, he/she should have in mind that the  $r$-norm $\|.\|_r$ of a diffeomorphism is an analog of $\|D(.)\|$ for the the $r$-derivative and to take a look at Lemma \ref{ineqnorms}, where using the chain rule, we will  obtain for any $f,g \in \Dc(M)$ a bound for $\|f\circ g\|_r $ in terms of the  $r$-norms of the diffeomorphisms $f$ and $g$.\\

The group $\D(M)$ has a natural topology known as the ``weak topology". This topology is defined as follows: Take a locally finite covering by coordinate charts $\{(U_i, \phi_i)\}_i$ of $M$.\\

For every diffeomorphism $f \in \Dc(M)$, every compact set $K \subset U_i$ such that $f(K) \subset U_j$, every integer $r \geq 0$ and every real number $\epsilon > 0$, there is a neighborhood $\mathcal N_{K,\epsilon}^{i,j}(f)$ of $f$ defined as the set of all diffeomorphisms $g \in \D(M)$ such that $g(K) \subset U_j$ and such that for every $0 \leq k \leq r$: $$  \|D^k(\phi_j \circ f \circ \phi_i^{-1}) - D^k(\phi_j \circ g \circ \phi_i^{-1})\| \leq \epsilon$$

Where for any function $f : \R^n \to \R^n$, $D^k(f)$ denotes any $k$-partial derivative of a component of $f$.\\

The topology induced in $\Dc(M)$ as a subset of $\D(M)$ happens to be metrizable, $\Dc(M)$ is actually a Frech\'et manifold (see \cite{Hir} for more details about this topology). We will denote by ``$d_{C^{\infty}}$" any metric which induces such topology in $\Dc(M)$.\\ 

%%%%CORRECT%%%%%
For the proof of Lemma \ref{m2} we will need our norm $\|.\|_r$ to have two properties. The first property is that the $r$-norm should be able to tell when the $r$-derivatives of  a sequence of diffeomorphisms $\{f_n\}$ is diverging or not. The second property is to have a bound for the $r$-norm of $f \circ g$ in terms of the $r$-norms  of $f$ and $g$.  Based on these needs and the definition of the topology of $\Dc(M)$,  we will define for an integer  $r \geq 1$  the $r$-norms in $\Dc(M)$ as follows:\\
 
Let's start first with the case when $M \subseteq \R^n$ is an open subset. In this case, we define for any compactly supported diffeomorphism $f \in \Dc(M)$ and for any $r \geq 1$, $$\| f\|_r = \max \left| \frac{\partial^r f}{\partial x_{i_1}\partial x_{i_2}..\partial x_{i_r}} \right|$$  The maximum is taken over all the partial derivatives of the different components of $f$ and over all the points $x \in M$. \\

In order to define $\| . \|_r$ for an arbitrary manifold,  we use a covering set $\{ U_i \}_i$ of $M$ by coordinate charts satisfying the following properties:\\

\begin{enumerate}
 
\item Each $U_i$ is diffeomorphic to a closed ball of $\R^n$ by  some diffeomorphism $\phi_i$.
\item $(\text{int}(U_i), \phi_i)$ is a system of coordinate charts for $M$.
\item Every compact set $K$ intersects a finite number of $U_i$'s.
%\item There is a constant $C$, such that for every $i,j$ satisfying $U_i \cap U_j \neq \emptyset $ we have that $ \|\phi_j \circ \phi_i^{-1}\|_r \leq C$.\\(ERASE)
\end{enumerate}

The existence of such covering follows easily from the paracompacity of $M$.\\

Let fix a compact set $K$ in $M$. After choosing an arbitrary covering with the properties above, we define for  every $f \in \D_K(M)$: $$\|f\|_{r,K} = \sup_{i,j} \| \phi_j \circ f \circ \phi_i^{-1}\|_r$$ The supremum is taken over all  $(i,j)$'s such that $U_i\cap K \neq \emptyset$, $U_j\cap K \neq \emptyset$, and such the expression $\phi_j \circ f \circ \phi_i^{-1}$ is well defined. The set of $(i,j)$'s satisfying these properties is finite and therefore the above expression is always finite.\\

\begin{remark}\label{normsremark} Whenever the compact set $K$ is clear in our context we will suppress the subindex $K$ and denote $\|.\|_{r,K}$ by $\|.\|_{r}$, but it is important to take into account that $\|.\|_r$ is depending on $K$.
\end{remark}

\begin{lemma}\label{aa}

Let $f_n$ be a sequence in ${\D_K}(M)$. Suppose there exist constants $C$ and $C_r$ (for every $r \geq 1$) such that the following holds:

\begin{enumerate}
\item  $\|D(f_n)\| \leq C$
\item $\|f_n\|_r \leq C_r$ for $n$ sufficiently large (depending on $r$)
\end{enumerate}

Then there is a subsequence of $\{f_n\}$ converging in the $C^{\infty}$ topology to a diffeomorphism $f$ satisfying $\|D(f)\| \leq C$ and $\|f\|_r \leq C_r$.

\end{lemma}

\begin{proof}

From the bound  $\|D(f_n)\| \leq C$, we conclude that  the sequence $f_n$ is equicontinuous, and by Arzela-Ascoli's theorem we obtain a subsequence $f'_n$ of $f_n$ converging uniformly to a continuous map $f: K \to M$.\\

Similarly, for a fixed integer $r \geq 1$,  the inequality $\|f'_n\|_r \leq C_r$ implies there is a subsequence $f_n''$ of $f'_n$ such that the first $(r-1)$ derivatives of $f_n''$ converge uniformly in any fixed coordinate chart intersecting $K$. Using Cantor's diagonal argument, we obtain a subsequence $f_n'''$ of $f''_n$, such that all the $r$-derivatives of $f'''_n$ converge uniformly, for every coordinate charts involving $K$. This implies that the map $f$ is $C^{\infty}$.\\

 %Using an exhaustion of $M$ by compact sets we can further pass to a subsequence converging to a $C^{\infty}$ map $f$ defined everywhere in $M$, that a priori might not be invertible.
 
 Using that $\|D(f_n)\| \leq C$, we can obtain that $f$ is bi-Lipschitz and therefore invertible. In conclusion $f$ is a diffeomorphism and satisfies $\|D(f)\| \leq C$.\\

\end{proof}

Observe that even though our definition of the $\|.\|_r$ norms depends strongly on the choice of coordinates charts, the previous statement is independent of the particular choice.

\begin{subsubsection}{\textbf{Metrics}}

In the proof of Lemma \ref{m2}, we will also need to be able to measure when a sequence of metrics $g_n$ is ``diverging" in the $C^{\infty}$ sense:\\

For a metric $g = \sum_{i,j} g_{i,j}  dx_idx_j$ defined in an closed sets $U \subseteq \R^n$, we define: 
\[
\|g\|_r = 
\begin{cases}
\max_{i,j, x \in U}{|g_{i,j}(x)|} & \text{for } r = 0\\
\max_{i,j}\|g_{i,j}\|_r    & \text{for } r \geq 1 
\end{cases}
\] 

%%%%%%%%

We defined the previous norm $\|.\|_r$ just for metrics defined in closed subsets of $\R^n$, the reason being that this shorten the use of new terminology and the length of our proofs.\\

In the same spirit of Lemma \ref{aa}, if a sequence of metrics $g_n$ defined in a closed set $U \subset \R^n$ satisfy that  $\|g_n\|_r \leq D_r$ for some fixed constants $D_r$, it is easy to show hat $g_n$ has a subsequence converging to a $C^{\infty}$ metric $g$ uniformly in each compact set. We will need to make use of the following fact, which is an easy consequence of the chain rule:

\begin{lemma}\label{metric}

Let $U$, $V$ be closed sets of $\R^n$. Let $g$ be any Riemannian metric defined in $U$ and  let $H: V  \to U$ be a diffeomorphism.  Suppose for every $r\geq1$, there exist constants  $C_r$, $D_r$ such that:

\begin{enumerate}
\item $\|g\|_r \leq D_r$ 
\item $\|H\|_r \leq C_r$

\end{enumerate}

Then, there are constants $D'_r$, just depending on $C_k$ and $D_k$ for $k \leq r+1$, such that $\|H^{\ast}(g)\|_r \leq D'_r$ for every $r$.

\end{lemma}

\begin{remark} $H^{\ast}(g)$ denotes the pullback of $g$ under $H$.
\end{remark}

\end{subsubsection}

\begin{subsubsection}{\textbf{Main inequality}}

The next lemma is the analog for higher derivatives of the inequality $\|D(f\circ g)\| \leq \|D(f)\| \|D(g)\|$ that we announced previously.\\

\begin{lemma}\label{ineqnorms}
For every integer $r \geq 2$, there exist constants $C_r$ such that for any $f, g \in {\Dc}(M)$ the following inequality holds: $$\| f \circ g\|_r \leq C_{r}(\max_{1 \leq l \leq r}  \{ \|f\|_l + \|g\|_l \})^{r+1}$$

\end{lemma}

\begin{proof}

We will first consider the case when $M = \R^n$. Applying the chain rule $r$ times to each of the components of the partial derivatives of $f \circ g$ we obtain a sum of terms.  Each term in this sum is the product of at most $r+1$ partial derivatives of some $f_i$'s and $g_j$'s ($f_i$, $g_j$ denote components of $f$ and $g$). The number of terms in this sum is at most a constant $C_r$ (just depending on $r$ and the dimension of $M$) and each of the terms is less than $\|f\|_l$ + $\|g\|_l$ for some $1 \leq l \leq r$, therefore:
$$\| f \circ g\|_r \leq C_r(\max_{1 \leq l \leq r}  \{ \|f\|_l + \|g\|_l\})^{r+1}$$

For an arbitrary manifold $M$, we take a covering by coordinate charts $(U_i, \phi_i)$ as in the definition of the $r$-norm. For every pair $(i,j)$ such that $((f\circ g)^{-1}(U_j)) \cap U_i \neq \emptyset$ and every coordinate chart $(U_k, \phi_k)$  such that $f^{-1}(U_k)$ intersects the set $((f\circ g)^{-1}(U_j)) \cap U_i$, we have that:

\begin{align*}
\| \phi_j \circ f\circ g \circ \phi_i^{-1}\|_r &=  \| \phi_j \circ f \circ \phi_k^{-1} \circ \phi_k \circ g \circ \phi_i^{-1}\|_r  \\
&\leq C_r(\max_{1 \leq l \leq r}  \{ \|\phi_j \circ f \circ \phi_k^{-1}\|_l + \|\phi_k \circ g \circ \phi_i^{-1}\|_l\})^{r+1} \\
&\leq C_r(\max_{1 \leq l \leq r}  \{ \|f\|_l + \|g\|_l\})^{r+1}
\end{align*}

In the previous equation, the first inequality  follows from the $\R^n$ case proved previously. We can conclude the following: $$\| f \circ g\|_r =  \sup_{i,j} \| \phi_j \circ f \circ g \circ \phi_i^{-1}\|_r  \leq C_{r}(\max_{1 \leq l \leq r}  \{ \|f\|_l + \|g\|_l\})^{r+1}$$

\end{proof}

\end{subsubsection}

\end{subsection}

%%%%%%%%%%%

\begin{subsection}{Proof of Lemma \ref{m2}}

The following lemma is the only place where we use Militon's theorem. Roughly, the lemma claims that there exists a compact set $\mathcal{K} \subset \Dc(N)$ such that for any sequence $h_n$ converging to the identity and $n$ sufficiently large, $\Phi(h_n)$ is as close to $\mathcal{K}$ as one wants. More precisely:

\begin{lemma} \label{m1}

Let $M$ and $N$ be two manifolds, let  $K$ be any compact subset of $N$, and  let $\Phi:  {\Dc}(M) \to {\Dc}(N)$ be a group homomorphism. There exists a compact set $K'$ and constants $C$ and $C_r$ such that: For every sequence $h_n$ in $\D_K(M)$ such that  $$ \lim_{n \to \infty} d_{C^{\infty}}(h_n, \text{Id}) = 0 $$ The following holds for $n$ sufficiently large:

\begin{enumerate}

\item $\Phi(h_n)$ is supported in $K'$.
\item $\| D(\Phi(h_n))\| \leq C$.
\item $\|\Phi(h_n)\|_r \leq C_r$.

\end{enumerate}

\end{lemma}

\begin{remark}
How large $n$ has to be in order to  $\|\Phi(h_n)\|_r \leq C_r$ to hold might depend on each particular $r$. The $r$-norms $\|.\|_r$ in $N$ are defined with respect to the compact set $K'$ in (1).
\end{remark}

In the proof of Lemma \ref{m1} we will use the following technical fact, which is a consequence of Lemma \ref{ineqnorms}.

\begin{lemma}\label{ineq}

 Fix an integer $r \geq 2$.  Let $C_i$ be the constants defined in \ref{m1} and  $s_1, s_2  \dots s_k$ be elements of ${\Dc}(M)$. Let $L$ be a constant such that: 
\begin{enumerate}

\item $\|s_j\|_i \leq L$,  for every $1\leq i \leq r$ and $1\leq j \leq k$.
\item $\underset{1\leq i \leq r}\max\{C_i, 2\} \leq L$

\end{enumerate}

for every $k \geq 1$ the following inequality holds:  $$\|s_1\circ s_2 \dots \circ s_k\|_r \leq L^{(r+2)^{2k}} $$ 

\end{lemma}

\begin{proof}

The proof goes by induction on $k$, the case $k=1$ is obvious. Suppose it holds for an integer $k$. Using Lemma \ref{ineqnorms}, we have that:

\begin{align*}
 \|s_1\circ s_2 \dots \circ s_{k+1}\|_r &\leq  L(\max_{1 \leq i \leq r} \{ \|s_1\circ s_2  \dots s_k\|_i + \|s_{k+1}\|_i \})^{r+1}  \\
&\leq L(\max_{1 \leq i \leq r} \{ \|s_1\circ s_2  \dots s_k\|_i +L\})^{r+1}   \\
&\leq L(L^{(r+2)^{2k}} + L)^{r+1} \\
&\leq L(2L^{(r+2)^{2k}})^{r+1} \\
&\leq L(L^{r+1}L^{(r+2)^{2k+1}})\\
&= L^{(r+2)^{2k+1} + (r + 2)}\\
\end{align*}

%%%%%%

Using that the inequality $(r+2)^{2k+1} + (r + 2) \leq (r+2)^{2(k+1)}$ holds for $r\geq1$, the results follows for $k+1$.

\end{proof}

\begin{proof}{(Lemma \ref{m1}):}\\

\textbf{Existence of $K'$}: The proof goes by contradiction. Suppose such a compact set $K' \subset N$ does not exist, using Cantor's diagonal argument one can finds a sequence $h_n \to \text{Id}$ such that the supports of $\Phi(h_n)$ are not contained in any compact set. Passing to a subsequence we can assume that $ d_{C^{\infty}}(h_n, \text{Id}) \leq \epsilon_n$ for the sequence $\{\epsilon_n\}$ in Theorem \ref{mil}. Using theorem \ref{mil} we obtain a finite set $S = \{s_1,s_2  \dots s_l\}$ such that the sequence $\{h_n\}$ is contained in the group generated by $S$. Applying $\Phi$ the same is true for the sequence $\{\Phi(h_n)\}$ and $S' = \{\Phi(s_1),\Phi(s_2)  \dots \Phi(s_l)\}$, therefore the support of all the $\Phi(h_n)$'s lies in the compact set $K''$ given by the union of the supports of the elements in $S'$, which gives us a contradiction.\\

\textbf{Bound for $ \| D(.)\|$}: Suppose that no such constant $C$ exists. Using a diagonal argument we can find a sequence $h_n \in \D_K(M)$  converging to the identity and such that the sequence $g(n) := \|D(\Phi(h_n))\| $ diverges. Furthermore, passing to a subsequence we can assume that $ d_{C^{\infty}}(h_n, \text{Id}) \le \epsilon_n$ and  that $\underset{n \to \infty}\lim  \frac{\log(g(n))}{k_n}= \infty$, for the constants  $\epsilon_n$ and $k_n$ defined in Theorem \ref{mil}.\\

 Using Theorem \ref{mil}  we can find a finite set $S = \{s_1,s_2  \dots s_l\}$ in $\Dc(M)$ generating a group containing al the $h_n$'s and such that  $l_{S}(h_n) \leq k_n$ for every $n$. Therefore, we have that $h_n = \prod_{j=1}^{l_n} s_{i_j}$  for some constants $l_n \leq k_n$. If we take a constant $L$ such that $\|D(\Phi(s_i))\| \leq L$ for every generator $s_i$  we get that: $$ g(n) = \|D(\Phi(h_n))\| = \big{\|} \prod_{j=1}^{l_n}D(\Phi(s_{i_j})) \big{\|} \leq \prod_{j=1}^{l_n}\|D(\Phi(s_{i_j}))\|    \leq L^{l_n} \leq L^{k_n}$$

Therefore the inequality $\frac{\log(g(n))}{k_n} \leq \log(L)$ holds, contradicting the fact that $ \frac{\log(g(n))}{k_n}$ diverges.\\

\textbf{Bound for $ \| .\|_r$}:  The proof is similar to the previous case, but we will need to use Lemma \ref{ineq}. Suppose that for a fixed integer $r$ such constant $C_r$ does not exist. In that case, we can find a sequence $h_n$ converging to the identity and such that $g(n) := \|\Phi(h_n)\|_r $ diverges. Passing to a subsequence, we can further assume that $ d_{C^{\infty}}(h_n, \text{Id}) \le \epsilon_n$ and also that: 
\begin{align}
\underset{n \to \infty}\lim  \frac{\log \log(g(n))}{k_n}= \infty
\end{align} 
For the sequences $(\epsilon_n)$ and $(k_n)$ in Lemma \ref{mil}.\\

 Using Lemma \ref{mil}, we can find a finite set $S$ generating a subgroup of $\D_c(M)$ where the inequality  $l_{S}(h_n) \leq k_n$ holds for every $n$. Let $L$ be a constant such that $\|\Phi(g)\|_r \leq L$ for every $g \in S$ and which satisfies the hypothesis of Lemma \ref{ineq}. Using Lemma \ref{ineq}, we have that:  $$ g(n) = \| \Phi(h_n) \|_r \leq L^{(r+2)^{2k_n}}$$ Taking logarithms in both sides we obtain: $$ \log \log(g(n)) \leq 2k_n\log(r+2) + \log(\log(L)) $$ which contradicts (1).
 
\end{proof}

Finally, we will formulate Lemma \ref{m2} one more time and finish its proof.\\

\begin{replemma}{m2}

Let $M$ and $N$ be $C^{\infty}$ manifolds and let $\Phi:  {\Dc}(M) \to {\Dc}(N)$ be a group homomorphism. Suppose that $h_n$ is a sequence in ${\Dc}(M)$ supported in a compact set $K$ and such that $$ \lim_{n \to \infty} d_{C^{\infty}}(h_n, \text{Id}) = 0$$ Then $\{\Phi(h_n)\}$ has a convergent subsequence, converging to a diffeomorphism $H$, which is an isometry for a $C^{\infty}$ Riemannian metric on $N.$

\end{replemma}

\begin{proof}

From Theorem \ref{m1}  we conclude that given a fixed integer $r\geq 1$, for $n$ large enough the inequality $\|\Phi(h_n)\|_r \leq C_r$ holds . Using Lemma \ref{aa} we can extract  a convergent subsequence from $\{\Phi(h_n)\}$ which is converging to a $C^{\infty}$ diffeomorphism $H$. Even more, taking powers of $h_n$  we have the following: For every fixed integer $k$,  the sequence $h_n^k \to \text{Id}$ and then  the inequality $\|\Phi(h_n^k)\|_r \leq C_r$ holds for $n$ large enough. Therefore we can conclude that $\| H^k\|_r \leq C_r$ for every integer $k$.\\
 
Fix a Riemannian metric $g'$ on $N$ and consider the sequence of metrics: $$g_n = \frac{1}{n+1}\sum_{k=0}^{n} (H^k)^{*}g'$$

(The notation $(H^k)^{*}g'$ denotes the pullback of the metric $g'$ under the diffeomorphism $H^{k}$). Next, we will show that the sequence $g_n$ has a subsequence converging to a $C^{\infty}$ metric $g$, which is invariant under $H$.  \\

To simplify the rest of the proof we will use the following notation. Take coordinate charts $(U_i, \phi_i)$ of $N$ as in the definition of the $\|.\|_r$ norms (See \ref{norms}). For any metric $g$ in $N$, let $g^{i}$ be the metric defined in the closed set $\phi_i(U_i) \subset \R^n$ obtained by pull back, more concretely $g^i := (\phi_i^{-1})^{*}g$. \\

Going back to our proof, observe that a diagonal argument shows that $g_n$ has a convergent subsequence if and only if in any coordinate chart, the corresponding sequence of metrics has a convergent subsequence. More concretely, we just need to show that for any fixed integer $i$, the sequence of metrics $\{g^{i}_n\}_n$ (as defined in the previous paragraph) have a convergent subsequence.\\

 Observe that  $g_n^i = \frac{1}{n+1}\sum_{k=0}^{n} ((H^k)^{*}g')^i$. From (1) on Theorem \ref{m1} the diffeomorphism $H$ is supported in a compact set, therefore for each fixed $i$ the set $\bigcup_k H^k(U_i)$ is contained in the union of a finite number of $U_j$'s and so we can assume it intersects non-trivially the sets $U_l$ for $ 1 \leq l \leq j$.  We define $D_r := \underset{1\leq l \leq j} \max \|g^l\|_r$.\\

For every integer $k$ and each $x \in U_i$, the point $H^{k}(x)$ lies in the interior of $U_l$ for some $1 \leq l \leq j$. Thus we have that $((H^k)^{*}g)^i =(\phi_l \circ H^{k} \circ \phi_i^{-1})^{*} g^l $ in a small ball around $x$. Using the inequalities $\|\phi_l \circ H^{k} \circ \phi_i^{-1}\| \leq C_r$ and $\|g^l\| \leq D_r$,  we can apply Lemma \ref{metric} to obtain constants $D'_r$ such that  $0 \leq \|((H^k)^{*}g)^i\|_r \leq D'_r$ for every integer $r\geq 1$. The previous inequality implies that $\|g_n^i\|_r \leq D'_r$ for every $n\geq1$ and therefore our sequence $\{g_n^i\}_n$ has a convergent subsequence as we wanted.\\

In conclusion, we can extract from the sequence $g_n$ a subsequence converging to a $C^{\infty}$ metric $g$. To finish our proof we just need to show that  the metric $g$ is non-degenerate  and invariant under $H$.\\

\textbf{Non degeneracy}: As the inequality $\| D(H^k)\|  \leq C$ holds for every integer $k$, we conclude that for each vector $v \in TM$, each of the metrics $g_{n}$ satisfy the inequality: $$ \frac{g'(v,v)}{C^2} \leq g_{n}(v,v)  \leq C^2g'(v,v)$$ This implies that any convergent subsequence of $\{g_n\}$ converge to a non-degenerate metric and so our $g$ is non-degenerate.\\

\textbf{Invariance under $H$}: For any vector $v \in TN$, we have:  

\begin{center}

$\|H^{\ast}g_n(v,v) - g_n(v,v) \| = \frac{\|(H^{n+1})^{*}g(v,v) - g(v,v)\|}{n+1} \leq (C+1)\frac{\|g(v,v)\|}{n+1}$

\end{center}

Taking limits for our subsequence in the previous inequality we obtain that the metric $g$ is invariant under $H$. 

\end{proof}

\end{subsection}

As a corollary, we obtain a proof of Theorem \ref{1} in the case $N$ is non compact.

\begin{corollary}

Let $M$ and $N$ be $C^{\infty}$ manifolds and suppose that $N$ is non compact,  any group homomorphism $\Phi: \D_c(M) \to \Dc(N)$ is weakly continuous.

\end{corollary}
 
 \begin{proof}
 
 Let $K$ be any compact set in $M$. Arguing by contradiction, suppose  $h_n$ is a sequence converging to the identity in $\D_K(M)$ but such that $\Phi(h_n)$ is not converging to the identity. We can suppose that $d_{C^{\infty}}(\Phi(h_n), \text{Id}) \geq C >1$ for every $n \geq 1$. By Lemma \ref{m2} there is subsequence of $\Phi(h_n)$ converging to an isometry $H$ in $N$ and by (1) in Theorem \ref{m1} the isometry $H$ should be the identity outside a compact set $K'$ of $N$, therefore $H$ is the identity everywhere contradicting that  $d_{C^{\infty}}(\Phi(h_n), \text{Id}) \geq C >1$.

 \end{proof}

\end{section}

\begin{section}{Continuous case}\label{conti}

\begin{subsection}{Notation}

Let $M$ and $N$ denote two $C^{\infty}$ manifolds of dimension $m$ and $n$ respectively. For the rest of this section we will assume that $N$ is closed. If $U$ is an open subset of $M$, we define: $$G_U = \{ f \in \Dc(M) | \text{ supp}(f) \subset U \}$$

We recall the definition of `` weak continuity" given in section \ref{intro}.

\begin{definition}

For any compact subset $K \subseteq M$, let $\D_K(M)$ denote the group of diffeomorphisms in $\Dc(M)$ supported in $K$. A group homomorphism $\Phi: \Dc(M) \to \Dc(N)$ is \emph{weakly continuous} if for every compact set $K \subseteq M$, the restriction $\Phi|_{\D_K(M)}$ of $\Phi$ to $\D_K(M)$ is continuous.

\end{definition}

\begin{remark}If $M$ is closed, this definition is equivalent to $\Phi$ being continuous in the usual sense.
\end{remark}

\end{subsection}

\begin{subsection}{Weak Continuity vs Continuity}

The concept of weak continuity for a group homomorphism $\Phi: \Dc(M) \to \Dc(N)$ is not equivalent to our homomorphism $\Phi$ being continuous in the weak topology of $\Dc(M)$ and one should not  expect such continuity to hold. For example, take the open unit ball $B \subset \R^n$. The homomorphism induced by the inclusion $i: B \to \R^n $ is a group homomorphism which is not a continuous map in the weak topology:\\

Take a sequence of diffeomorphisms $f_n$ supported in small disjoint balls $B_n$ contained in $B$ such that each $B_n$ is centered at  a point $p_n$ and has radius $r_n$ in such a way that the sequence $p_n$ converges to a point in $\partial \bar{B}$ , the radii $r_n \to 0$  and such that $D_{p_n}(f_n) = 2\text{Id}$.  The sequence $f_n$ converges to the identity in $\Dc(B)$ in the weak topology because restricted to every compact set $K \subset B$, the sequence $f_n$ is  the identity for large $n$. Nonetheless $f_n$ does not converge in $\Dc(\R^n)$ and therefore $\Phi$ is not continuous.\\
 
 \end{subsection}

In this section, we will assume that $\Phi$ is always \emph{weakly continuous}. The main purpose of this section is to establish the following result:\\

\begin{theorem}\label{cont}

If $\Phi: {\Dc}(M) \to {\Dc}(N) $ is a non-trivial weakly continuous homomorphism of groups, then $\dim(M) \leq \dim(N)$. If $\dim(M) = \dim(N)$, then $\Phi$ is extended topologically diagonal.

\end{theorem}

The definition of ``extended topologically diagonal'' will be given in \ref{etd} . The assumption of weak continuity results to be very strong for a homomorphism between groups of diffeomorphisms. As a proof of that, we recall the following classic result (See  \cite{MZ}, Ch. 5):

\begin{theorem}\label{mz}

(Montgomery, Zippin) Let $\Phi$ be a continuous homomorphism from a finite dimensional Lie group $G$ to the group ${\Dc}(M)$ of $C^{\infty}$ diffeomorphisms  of a manifold $M$. Then the map $\pi: M \times G \to M$ given by $\pi(x,g) = \Phi(g)x$ is a $C^{\infty}$ map in both variables $(x,g)$ simultaneously.\\

\end{theorem}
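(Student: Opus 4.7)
The strategy is to upgrade the continuity of $\Phi$ as a group homomorphism first to joint continuity of the evaluation map $(x,g)\mapsto \Phi(g)(x)$, then to joint smoothness using the Lie algebra of $G$. Since the topology on $\Dc(M)$ is at least as fine as the topology of uniform convergence on compacta together with all spatial derivatives, the continuity hypothesis on $\Phi$ already implies that if $g_k\to g$ in $G$ and $x_k\to x$ in $M$ then $\Phi(g_k)(x_k)\to \Phi(g)(x)$, and the same holds for any spatial derivative; so joint continuity of $(x,g)\mapsto \Phi(g)(x)$ as well as of its partial derivatives in $x$ is immediate.

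Next, for each $X$ in the Lie algebra $\mathfrak{g}$ of $G$, consider the one-parameter subgroup $t\mapsto \exp(tX)$ of $G$. Its image $\varphi^X_t := \Phi(\exp(tX))$ is a continuous one-parameter family of $C^\infty$ diffeomorphisms of $M$ satisfying $\varphi^X_{s+t}=\varphi^X_s\circ \varphi^X_t$. The key analytic step is the Bochner--Montgomery regularity theorem: such a flow must be generated by a $C^\infty$ vector field $V_X$ on $M$, and the map $(t,x)\mapsto \varphi^X_t(x)$ is in fact $C^\infty$ jointly in $(t,x)$. I would prove this by fixing a point $p\in M$, working in a coordinate chart, and averaging the orbit in the time variable: convolving $t\mapsto \varphi^X_t(p)$ with a bump function produces a smooth path, and the group law allows us to trade spatial smoothness (which we already have, uniformly in $t$ on compact time intervals) for temporal smoothness, so that the difference quotients $\tfrac{1}{t}(\varphi^X_t(x)-x)$ converge in $C^\infty$ on compact subsets as $t\to 0$ to a $C^\infty$ vector field $V_X$.

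Once the smooth infinitesimal generators $V_X$ are in hand, I would choose a basis $X_1,\ldots,X_d$ of $\mathfrak{g}$ and use the fact that the map
\[
F(t_1,\ldots,t_d) = \exp(t_1X_1)\cdots\exp(t_dX_d)
\]
is a diffeomorphism from a neighborhood of $0\in\R^d$ onto a neighborhood of the identity in $G$. On this neighborhood,
\[
\Phi(F(t_1,\ldots,t_d))(x) = \varphi^{X_1}_{t_1}\circ\varphi^{X_2}_{t_2}\circ\cdots\circ\varphi^{X_d}_{t_d}(x),
\]
and by the Bochner--Montgomery step each factor is $C^\infty$ jointly in its time parameter and in $x$, so their composition is $C^\infty$ in $(t_1,\ldots,t_d,x)$. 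This gives joint smoothness of $(x,g)\mapsto \Phi(g)(x)$ on a neighborhood of the identity in $G$, and left translation by a fixed $g_0\in G$ (together with the fact that $\Phi(g_0)$ is a fixed $C^\infty$ diffeomorphism of $M$) propagates joint smoothness from this neighborhood to all of $G\times M$.

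The main obstacle, by a wide margin, is the Bochner--Montgomery regularity step: passing from a merely continuous one-parameter group of $C^\infty$ diffeomorphisms to a smooth flow. Once that is established, the Lie-theoretic assembly in the last paragraph is essentially bookkeeping, and joint continuity of the evaluation map is a triviality given the topology on $\Dc(M)$.
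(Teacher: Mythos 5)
The paper does not prove this statement at all: it is cited verbatim from Montgomery and Zippin's book \emph{Topological transformation groups}, and the text treats it as a known black box to feed into Corollary \ref{flux}. So there is no proof in the paper to compare yours against. That said, your sketch is a correct and, as far as I can tell, the standard route to the result. The heart of it is exactly what you identify as Bochner--Montgomery regularity: a continuous one-parameter group of $C^\infty$ diffeomorphisms is automatically a $C^\infty$ flow, jointly smooth in $(t,x)$. Once that is in hand, coordinates of the second kind $F(t_1,\dots,t_d)=\exp(t_1X_1)\cdots\exp(t_dX_d)$ give joint smoothness near the identity of $G$, and translation by a fixed $\Phi(g_0)$ (which is a single smooth diffeomorphism) propagates it globally; all of that bookkeeping is sound. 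Two small cautions worth recording: your claim that joint continuity of $(x,g)\mapsto\Phi(g)(x)$ is \emph{immediate} does use that $C^1$-convergence of $\Phi(g_k)$ on compacta gives locally uniform Lipschitz bounds, so it is a one-line estimate rather than a tautology; and your paragraph sketching a proof of Bochner--Montgomery (time-averaging with a bump function, trading spatial for temporal regularity) is the right idea but is the one place where a reader would need to consult Montgomery--Zippin (Chapter V) for the actual argument, since the convergence of the difference quotients $\frac{1}{t}(\varphi^X_t(x)-x)$ in $C^\infty$ is precisely what that theorem is asserting and is not easy. Since you flag it explicitly as the main obstacle, this is honest, but be aware that the whole weight of the theorem rests there.
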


\begin{corollary}\label{flow}

 If $\Phi: {\Dc}(M) \to {\Dc}(N) $ is weakly continuous and $X$ is a compactly supported vector field in $M$ generating a flow $f_t \in \Dc(M)$, then $\Phi(f_t)$ is a flow in $\Dc(N)$ generated by some vector field $Y$ supported in $N$.

\end{corollary}

\begin{proof}

$H(t) = \Phi(f_t)$ defines a homomorphism from $\R$ to $\D(M)$. $H(t)$ is continuous by the weak continuity of $\Phi$ and therefore is a $C^{\infty}$ flow by Theorem \ref{mz}. We obtain that $\Phi(f_t)$ is generated by some vector field $Y$ in $N$.
 
\end{proof}

In our proofs, we will use the following known fact. For a proof of this fact, see \cite{B}.

\begin{lemma}{(Fragmentation property)} Let $M$ be a manifold and $\mathcal{B}$ be an open cover of $M$, then $\Dc(M)$ is generated by the following set: $$ \mathcal{C} = \{f \in \Dc(M) \text{ such that }  supp(f) \subset B_0\text{, for some } B_0 \in \mathcal{B}\}$$ 

\end{lemma}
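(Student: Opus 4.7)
The plan is to reduce the statement to fragmenting diffeomorphisms that are $C^1$-close to the identity, and then perform the local fragmentation using a partition of unity subordinate to $\mathcal{B}$.

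\textbf{Reduction to close-to-identity.} Given $f\in\Dc(M)$ with compact support, fix a smooth isotopy $\{f_t\}_{t\in[0,1]}$ with $f_0=\text{Id}$, $f_1=f$, and with all supports contained in a fixed compact set $L\subseteq M$. For any integer $N\ge 1$, write
\begin{equation*}
f \;=\; \bigl(f_{1}\circ f_{(N-1)/N}^{-1}\bigr)\circ\cdots\circ\bigl(f_{1/N}\circ f_0^{-1}\bigr),
\end{equation*}
so that each factor is supported in $L$ and, for $N$ large, is as $C^1$-close to the identity as desired. Hence it suffices to prove the lemma under the additional hypothesis that the diffeomorphism is $C^1$-close to the identity.

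\textbf{Local fragmentation.} Choose a finite subcover $B_1,\ldots,B_k\in\mathcal{B}$ of $L$ and a smooth partition of unity $\{\rho_i\}$ with $\mathrm{supp}(\rho_i)\subset B_i$. Fix a complete Riemannian metric on $M$; for $g$ sufficiently $C^1$-close to the identity, there is a unique small vector field $\xi$ with $g(x)=\exp_x\xi(x)$. I would build the factorization by induction on the number of cover elements: set $h_1(x)=\exp_x\bigl(\rho_1(x)\,\xi(x)\bigr)$, which has support in $B_1$, and show that the residue $g\circ h_1^{-1}$ can be arranged to have support inside the compact set covered by $B_2,\ldots,B_k$ and to remain $C^1$-small; then apply the inductive hypothesis.

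\textbf{Main obstacle.} The real difficulty lies in Step 2: vector fields add but diffeomorphisms do not compose additively on overlapping supports, so the induction requires quantitative control over how the $C^1$-size of the residue $g\circ h_1^{-1}$ depends on the $C^1$-size of $g$. This is arranged by starting the whole argument with $N$ large enough that the error estimates propagate through all $k$ inductive steps without exceeding the threshold of $C^1$-smallness needed to guarantee $g\circ h_1^{-1}=\exp_x\eta(x)$ for a well-defined $\eta$. An alternative is to drop the requirement that the factorization be exact at each stage and run a Newton-type iteration $g\mapsto g\circ(h_k\circ\cdots\circ h_1)^{-1}$, whose defect is quadratic in $\|\xi\|_{C^1}$ and hence converges in $C^1$ to the identity, yielding an honest finite product in $\mathcal{C}$ after combining with an application of the isotopy extension theorem to control supports. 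This is precisely the bookkeeping that \cite{B} carries out in detail.
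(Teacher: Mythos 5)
The paper does not actually prove this lemma: it is stated as a known fact and the reader is referred to Banyaga's book \cite{B} (later, in the proof of Lemma \ref{weakly}, the paper points specifically to Lemma~2.1.8 of \cite{B}). So there is no in-paper proof to compare yours against.

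That said, your sketch is an essentially correct outline of the standard argument, and the two stages (reduction to $C^1$-small diffeomorphisms via the isotopy, then a partition-of-unity local fragmentation) are exactly what the reference carries out. A few remarks. The textbook route does the local step without the exponential map: with $\lambda_i=\rho_1+\dots+\rho_i$ and the (already close-to-identity) isotopy $f_t$, set $g_i(x)=f_{\lambda_i(x)}(x)$; then each $h_i=g_i\circ g_{i-1}^{-1}$ is a diffeomorphism supported in (a slight enlargement of) $B_i$, and $f=h_k\cdots h_1$ with exactly one factor per cover element. Your choice $h_1(x)=\exp_x(\rho_1(x)\xi(x))$ is the special case of this with the geodesic isotopy, so the ideas coincide; the only thing to be precise about is the support claim for the residue. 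The set $\{\rho_1<1\}$ is covered by $\bigcup_{i\ge 2}B_i$ since the $\rho_i$ sum to $1$ on $\mathrm{supp}(g)$, so $g\circ h_1^{-1}$ does land in the inductive hypothesis after you shrink the cover slightly to absorb the displacement by $h_1^{-1}$ — it does \emph{not} have support disjoint from $B_1$, and it is worth phrasing the induction as ``residue supported in $\bigcup_{i\ge 2}B_i$'' rather than ``away from $B_1$.'' Finally, I would drop the Newton-iteration alternative: as written it produces an infinite product and you give no mechanism for truncating it to a finite word in $\mathcal{C}$, which is the whole content of the lemma. The ``take $N$ large'' version you lead with is the correct one and needs no backup plan.
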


In the next two subsections we will construct some examples of homomorphisms between groups of diffeomorphisms when $\text{dim}(M) = \text{dim}(N)$.\\

 \begin{subsection}{Examples}
 
When $\dim(M)= \dim(N)$, there are two fundamental ways of constructing examples of homomorphisms of the type $\Phi: \Dc(M) \to \Dc(N)$. The first one is to embed a finite number of copies of $M$ into $N$ and to act in such copies in the obvious way (topologically diagonal). The second one consist of homomorphisms coming from lifting diffeomorphisms of $M$ to a covering $N$. The main theorem of this section shows that if $\Phi$ is continuous, all  possible homomorphisms are combinations of these two examples. We will discuss these two fundamental examples in more detail:
 
 \end{subsection}
 
 \begin{subsubsection}{Topologically diagonal}\label{td}
 
 \begin{definition}\label{td1}

A homomorphism $\Phi: \Dc(M) \to \Dc(N)$ is  \emph{topologically diagonal} if there exists a finite collection of disjoint open sets $U_i \subseteq N$ and diffeomorphisms $\rho_i : M \to U_i$ , so that for every $f \in {\Dc}(M)$ we have:
\[
\Phi(f)(x) = 
  \begin{cases}
  \rho_i \circ f  \circ \rho_i^{-1}(x)  & \text{ if } x \in U_i\\
   x                                           & \text{     otherwise }\\
  \end{cases}
\]

\end{definition}

The fact that $U_i$ is a finite collection is necessary. Even though we may be able to embed an infinite collection of $M$'s in a closed manifold $N$, if the collection is not finite, the homomorphism $\Phi$ is not well defined.To illustrate better this point, observe the following:\\

Let $M = \R^n$ and suppose there is an infinite collection of open sets $U_i \subset N$ giving rise to a homomorphism $\Phi$ as in \ref{td1}. Take a diffeomorphism $f$ supported in the unit  ball around the origin in $\mathbb{R}^n$,  such that $f(0) = 0$ and $Df(0) = 2\text{Id}$.\\

Using the notation of $\rho_i$ as in  \ref{td1}, consider the sequence $\rho_i(0) = p_i$ in $N$. $\{p_i\}_i$ is an infinite sequence by assumption and therefore has an accumulation point $p$ in $N  \setminus \cup_i{U_i}$.  We have that $D(\Phi(f))(p_i) = 2\text{Id}$, and by continuity we have that $D(\Phi(f))(p) = 2\text{Id}$. It's not difficult to see by the compactness of $N$ that there is a another set of points $q_i \to p$  such that $\Phi(f)q_i= q_i$ and that $D(\Phi(f))(q_i) = \text{Id}$, which implies that $D(\Phi(f))(p) = \text{Id}$, a contradiction.\\
 
  \end{subsubsection}

 \begin{subsubsection}{Finite coverings}\label{covers}
 
 Many times when $N$ is a finite covering of $M$ there is a homomorphism $\Phi: \Dc(M) \to \Dc(N)$ obtained by lifting a given $f \in \Dc(M)$ to an appropriate diffeomorphism $\tilde f$ of $N$. For example, if $M$ and $N$ are closed hyperbolic surfaces, every diffeomorphism of $M$ isotopic to the identity can be lifted to the universal covering $\mathbb{H}^2$ (Hyperbolic plane). Even more, it can be lifted to a unique homeomorphism $f':  \mathbb{H}^2 \to  \mathbb{H}^2$ such that $f'$ restricts to the identity in $\partial \mathbb{H}^2 = \s^1$. This lift commutes with all the covering translations, and so one can project $f'$ to a diffeomorphism $\tilde f$ of our covering surface $N$. One can check that  $\widetilde{fg} = \tilde{f} \tilde{g}$ by the uniqueness of these lifts. Therefore, we obtain a homomorphism $\Phi: \Dc(M) \to \Dc(N)$.\\
 
Unfortunately, this is not true for all covering maps $\pi: M \to N$. For example, if we let  $M = N = \s^1$ and $\pi: \s^1 \overset{2}\to \s^1$, the obvious $2$ cover, any of the lifts of the rotation $r_2$ in $\s^1$ of order 2, has order 4, and therefore it is impossible to construct a homomorphism $\Phi$ that commutes with $p$. The obstruction as we will see, comes from the fact that $\pi_1(\Dc(\s^1)) \cong \mathbb{Z}$ is generated by a full rotation around the circle and that $\pi: \s^1 \overset{2}\to \s^1$ corresponds to the subgroup $2\mathbb{Z}$ of $\mathbb{Z}$.\\
 
 In a more precise way, we have the following situation: Let $G = \Dc(M)$, $H$ be the group consisting of lifts of elements of $G$ to $N$ and $L$ be the group of deck transformations corresponding to the covering map $\pi: M \to N$, we have an exact sequence of discrete groups:

 \begin{equation}\label{es}
 e \to  L \to H \to G \to e
 \end{equation}
 
We are interested in understanding when this sequence splits.\\

For sake of completeness, we mention a useful criterion for proving this. For a proof, see \cite{Br} p.104.

\begin{lemma} The sequence $e \to  L \to H \to G \to e$ splits if and only if the two following conditions are satisfied:

\begin{itemize}

\item The natural map $H \to Out(L)$ lifts to $Aut(L)$.
\item The element in $H^3(G, Z(L))$ defined by the exact sequence is trivial ($Z(L)$ denotes the centralizer of $L$). 
 
 \end{itemize}
\end{lemma}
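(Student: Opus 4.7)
The plan is to invoke the Eilenberg--MacLane classification of group extensions with non-abelian kernel, as developed in Brown \cite{Br}, Chapter IV. Any extension $e \to L \to H \to G \to e$ canonically induces an outer action $\omega : G \to Out(L)$ via conjugation; two extensions of $G$ by $L$ inducing the same outer action differ by a class in $H^2(G, Z(L))$, once the corresponding obstruction class in $H^3(G, Z(L))$ vanishes.

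For the forward direction, a homomorphic section $s : G \to H$ makes conjugation $g \mapsto (l \mapsto s(g)\, l\, s(g)^{-1})$ into an honest homomorphism $\tilde\omega : G \to Aut(L)$ lifting $\omega$, which establishes (1). Since $H$ is then isomorphic to the semidirect product $L \rtimes_{\tilde\omega} G$, the Eilenberg--MacLane obstruction in $H^3(G, Z(L))$ attached to the extension vanishes (a semidirect product is always unobstructed), giving (2). For the converse, condition (1) supplies a lift $\tilde\omega : G \to Aut(L)$, so one may form $E = L \rtimes_{\tilde\omega} G$; both $H$ and $E$ realize $\omega$. The remaining task is to show $[H] = [E]$ as equivalence classes of extensions. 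Pick a set-theoretic section $\sigma : G \to H$ and write $\sigma(g_1)\sigma(g_2) = c(g_1,g_2)\,\sigma(g_1 g_2)$, obtaining a factor set $c$; condition (2) is engineered so that the class of $c$ is killed by modifying $\sigma$ by elements of $L$, producing a genuine homomorphic section and hence a splitting.

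The main obstacle is the cohomological bookkeeping in the converse direction: one must unravel the definition of the $H^3$ obstruction attached to an extension and verify that its vanishing, together with the chosen lift $\tilde\omega$ from (1), permits one to trivialize the factor set $c$ by coboundaries. Because $L$ is non-abelian one cannot appeal directly to the classical $H^2$ classification of central extensions, and the long exact sequences relating the cohomology of $G$ with coefficients in $L$, $Z(L)$, and $Inn(L)$ must be handled carefully. The complete treatment is standard and carried out in detail in Brown \cite{Br}.
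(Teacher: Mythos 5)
The paper itself does not prove this lemma; it simply cites Brown \cite{Br}, and your proposal ultimately does the same, so at the level of strategy the two coincide. Your forward direction is fine. The converse direction, however, has a genuine gap, and in fact the lemma as stated appears to be incorrect.

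The obstruction class in $H^{3}(G,Z(L))$ attached to an outer action $\omega\colon G \to Out(L)$ is, in the Eilenberg--MacLane theory you quote, the obstruction to the \emph{existence} of some extension realizing $\omega$. Once an extension $e \to L \to H \to G \to e$ is handed to us, this obstruction automatically vanishes, so condition (2) is satisfied by every extension, split or not; it cannot help you show $[H]=[E]$ in your last step. What actually controls whether $H$ splits, once a lift $\tilde\omega$ is chosen, is the class of $H$ in $H^{2}(G,Z(L))$ relative to the basepoint $L\rtimes_{\tilde\omega}G$. Your sentence that ``condition (2) is engineered so that the class of $c$ is killed by modifying $\sigma$'' asserts precisely this $H^{2}$-triviality, but it is not a consequence of the $H^{3}$ hypothesis; the degrees are being conflated. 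Concretely, take $L=\mathbb{Z}/2$, $G=\mathbb{Z}/2$, $H=\mathbb{Z}/4$: here $L$ is abelian, so $Out(L)=Aut(L)$ is trivial and condition (1) is vacuous, and the $H^{3}$ obstruction vanishes because an extension exists, yet $\mathbb{Z}/4$ does not split over $\mathbb{Z}/2$. A correct criterion must involve the class in $H^{2}(G,Z(L))$ (up to the ambiguity in the choice of lift $\tilde\omega$), and your converse needs to be recast in those terms rather than deferring to a vanishing $H^3$ obstruction.
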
  

As a consequence of Theorem \ref{1} (whose proof will be given in section \ref{proof}) our  homomorphism $\Phi: G \to H$ is  continuous (when $M$ and $N$ are closed) and therefore showing that the previous sequence splits discretely is equivalent to show it splits continuously. Therefore, we just need to understand in which case there exists a continuous homomorphism $\Phi: G \to H$ which splits our sequence. \\

Observe that $H$ is a covering of $G$, and so the existence of such continuous section $\Phi$ is equivalent to the covering $H$ being trivial: $H$ as a topological space should consist of  the union of disjoint copies of $G$. This happens if and only if every element of $L$ lies in different components of $H$. In conclusion, our sequence splits if there is no deck covering transformation $L$ isotopic to the identity in $H$.\\

In conclusion, our exact sequence \eqref{es} splits if and only if there is no deck transformation $L$ of the covering $\pi: N \to M$, which is isotopic to the identity in $\Dc(N)$. \\

Observe that if there is a path $\gamma \in H$ connecting the identity to a non-trivial deck transformation $L$, projecting $\gamma$ down to $G$ we obtain a loop $\gamma'$ representing a non-trivial element in $\pi_1(G, e)$, the fundamental group of $G$ based at the identity.\\ 

To obtain a more precise statement of when this happens, we will need the following definition:\\

Take an arbitrary point $x \in M$, there is a natural evaluation map $E_x: \Dc(M) \to M$ given by $E_x(f) = f(x)$ and so we have a natural homomorphism of fundamental groups: $${E_x}^{*}: \pi_1(\Dc(M), e) \to \pi_1(M,x)$$

Observe the following fact:

\begin{prop} A closed loop $\gamma \in \pi_1(G,e)$ has a lift to $H$ connecting the identity to a non-trivial deck transformation if and only if ${E_x}^{*}(\gamma) \not \in \pi_1(N)$.

\end{prop}

\begin{proof}
In one direction, if a loop  $\gamma \in \pi_1(G,e)$ satisfies ${E_x}^{*}(\gamma) \not\in \pi_1(N)$, the lift of ${E_x}^{*}(\gamma)$ to $N$ is not a closed loop. Therefore,  the lift of $\gamma$ to $H \subset \Dc(N)$ starting at the identity has to have the other endpoint in a non-trivial lift of the identity of $\Dc(M)$, that is, a non-trivial deck transformation. The other direction is trivial.\\ 

\end{proof}

Summarizing all the previous discussion, we have the following criterion:

 \begin{lemma} \label{crit}
 Let $\pi:  N \to M$ be a covering map. There exists a unique continuous homomorphism ${\pi}^*: \Dc(M) \to \D_c(N)$  such that for every $f \in \Dc(N)$, the following diagram commutes:
 
 \[
 \begin{CD}
N @>{{\pi}^{*}(f)}>> N \\
 @V{\pi}VV          @VV{\pi}V \\
M @>>{f}> M
 \end{CD}
\]

if and only if $ E_x^{*}(\pi_1(\D_c(M), e)) \subset \pi_1(N)$.

\end{lemma}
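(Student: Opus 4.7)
The plan is to reduce the existence of a continuous homomorphic section to the triviality of the covering $H \to G$ set up in the paragraphs preceding the lemma. Write $G = \Dc(M)$ and let $H \subset \Dc(N)$ denote the group of all lifts of elements of $G$, with kernel $L$ the group of deck transformations of $\pi$. The projection $H \to G$ is a regular covering of topological groups. A continuous homomorphic splitting $\pi^*: G \to H$ exists if and only if this covering is trivial over the identity component of $G$; and since $G=\Dc(M)$ is path-connected by definition (all its elements are isotopic to the identity), triviality on that component is triviality on all of $G$. Indeed, a continuous section yields a path-connected subgroup $\pi^*(G)\subset H$ containing the identity and mapping homeomorphically onto $G$, which therefore must coincide with the identity component $H_0$; conversely, if $H \to G$ is trivial then $H_0\to G$ is an isomorphism of topological groups and its inverse provides $\pi^*$.

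Next I would invoke standard covering space theory to detect triviality of $H \to G$ through the monodromy action $\pi_1(G,e)\to L$: the covering is trivial exactly when every loop in $G$ based at the identity lifts to a closed loop in $H$. The lemma immediately preceding this one translates that condition into the statement that $E_x^{*}(\gamma)\in \pi_1(N)$ for every $\gamma\in\pi_1(G,e)$, i.e.\ $E_x^{*}(\pi_1(\Dc(M)))\subset \pi_1(N)$. Combining the two reductions gives both directions of the equivalence, and the section obtained in this way obviously makes the displayed diagram commute because each element of $H_0$ is by construction a diffeomorphism of $N$ lying over the corresponding element of $G$.

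For uniqueness, any two continuous homomorphic sections $\pi^*_1,\pi^*_2$ produce a continuous map $g\mapsto \pi^*_1(g)\pi^*_2(g)^{-1}$ from the connected group $G$ to the discrete group $L$; this map is therefore constant, and since both sections are homomorphisms sending the identity to the identity, that constant must be the identity deck transformation, forcing $\pi^*_1=\pi^*_2$. The main obstacle I expect is not a conceptual one but the careful verification that $H$, equipped with its subspace topology from $\Dc(N)$, genuinely carries the structure of a covering of $G$, so that path-lifting and the identification of $H_0$ as an open subgroup are valid. This rests on the fact that a sufficiently $C^0$-small isotopy of $M$ from the identity lifts uniquely through $\pi$ to a $C^0$-small isotopy of $N$ by diffeomorphisms, which provides local continuous sections of $H\to G$ near the identity and hence everywhere by translation.
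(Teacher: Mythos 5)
Your proof follows the same route as the paper: identify $H\to G$ as a covering of topological groups, reduce existence of a continuous homomorphic section to triviality of this covering over the path-connected base $G=\Dc(M)$, and translate triviality through the immediately preceding lemma into the condition $E_x^{*}(\pi_1(\Dc(M)))\subset\pi_1(N)$. You also supply two points the paper leaves implicit and which are worth having: the uniqueness argument (the map $g\mapsto\pi^{*}_1(g)\pi^{*}_2(g)^{-1}$ from the connected group $G$ into the discrete group $L$ must be constant) and the remark that one should actually verify $H$, with the subspace topology from $\Dc(N)$, is a covering of $G$ via local lifting of $C^0$-small isotopies — both are correct.
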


Observe that if $f_t$ is a path in $\Dc(M)$ representing a loop $\alpha$ of $\pi_1(\Dc(M), e)$ based at the identity and $\gamma_s$ is a loop based at $x$ representing an element of $\pi_1(M)$, the family of paths $\{f_t(\gamma_s)\}$ give us a homotopy between the loops $\gamma$ and $(E_x^{*}(f_t))^{-1}\gamma(E_x^{*}(f_t))$. This shows that the image of ${E_x}^{*}$ is a central subgroup of $\pi_1(M)$. In particular, we obtain the following useful corollary:

\begin{corollary} If $\pi: N \to M$ is a covering map and $\pi_1(M)$ is center-less, then there is a homomorphism $\pi^{*}: \Dc(M) \to \Dc(N)$ as in Lemma \ref{crit}.

\end{corollary}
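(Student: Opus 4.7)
The plan is to derive this as an immediate formal consequence of Lemma \ref{crit} combined with the centrality observation established in the paragraph immediately preceding the corollary statement. First I would recall that Lemma \ref{crit} produces the desired continuous lift $\pi^{*}$ as soon as one verifies the containment ${E_x}^{*}(\pi_1(\Dc(M),\mathrm{Id})) \subset \pi_{*}(\pi_1(N))$ as subgroups of $\pi_1(M,x)$. Then I would invoke the structural fact already derived: by exhibiting the one-parameter family $t \mapsto f_t(\gamma)$ as a based homotopy between $\gamma$ and its conjugate $({E_x}^{*}(f_t))^{-1}\, \gamma\, {E_x}^{*}(f_t)$, one obtains that the image of ${E_x}^{*}$ lies in the center of $\pi_1(M,x)$. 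Under the standing hypothesis that $\pi_1(M)$ is centerless, this image must therefore be the trivial subgroup, which is certainly contained in $\pi_{*}(\pi_1(N))$, so the hypothesis of Lemma \ref{crit} is fulfilled and the lemma delivers $\pi^{*}: \Dc(M) \to \Dc(N)$.

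I do not anticipate any genuine obstacle here, since the corollary is essentially a one-line deduction once the two inputs are in place and no new geometric construction is needed. The only point requiring minor care is to keep the base-point conventions of Lemma \ref{crit} and of the centrality computation consistent; but the discussion preceding the corollary already observes that the image of ${E_x}^{*}$ does not depend on the choice of base-point $x$, so one may freely evaluate at whichever base-point is most convenient and transfer the conclusion. With this in hand, the argument is complete and no further ingredients are invoked.
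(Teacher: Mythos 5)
Your argument is correct and matches the paper's intended reasoning: the paper states the corollary without a formal proof environment precisely because it is immediate from the preceding observation that $\mathrm{im}(E_x^*)$ is central in $\pi_1(M)$, hence trivial when $\pi_1(M)$ is centerless, so the hypothesis of Lemma \ref{crit} holds vacuously. (One minor wording note: the homotopy $t\mapsto f_t(\gamma)$ is a free homotopy whose basepoint traces out $E_x^*(f_t)$, not a based homotopy, but the conjugation conclusion you draw from it is the right one.)
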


\end{subsubsection}

\begin{subsection}{Extended topologically diagonal}

Based on the previous two examples we make the following definition:

\begin{definition} \label{etd}

A homomorphism $\Phi: \Dc(M) \to \Dc(N)$ is  \emph{extended topologically diagonal} (See Figure \ref{fig}) if there exists a finite collection of  disjoint open sets $U_i \subseteq N$ and  a set of  finite coverings $\pi_i: M_i \to M$, together with a collection of diffeomorphisms $\rho_i: M_i \to U_i$ such that:\\

For every $f \in {\Dc}(M)$, the following holds:

\[
\Phi(f)(x) = 
  \begin{cases}
  \rho_i \circ \pi_i^{*}(f)  \circ \rho_i^{-1}(x)  & \text{ if } x \in U_i\\
   x                                           & \text{     otherwise }\\
  \end{cases}
\]
 
Where $\pi_i^{*}: \Dc(M) \to \Dc(M_i)$ are the homomorphisms coming from the finite coverings $\pi_i$ as in the previous subsection.

\end{definition}

\begin{remark} The coverings have to be finite in order for $\Phi$ to be an actual homomorphism into $\Dc(N)$. See the comments at the end of \ref{td}.

\end{remark}

We will illustrate the previous definition with the following example:

\begin{figure}\label{fig}
  \centering
    \includegraphics[width=0.8\textwidth]{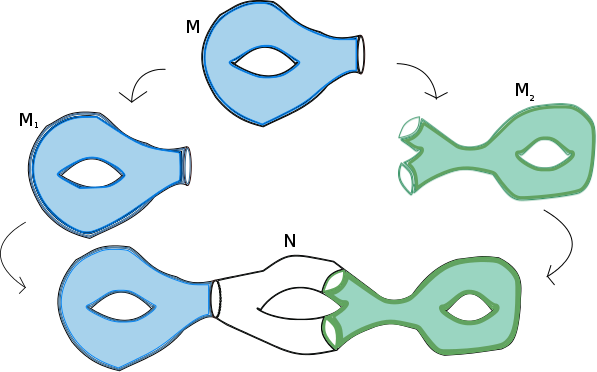}
    \caption{An extended topologically diagonal embedding}

\end{figure}
\begin{exmp}Let $M$ be the once punctured 2-dimensional torus and let $N$ be the closed orientable surface of genus $3$. Take $M_1$ to be a copy of $M$ and $M_2$ to be the twice punctured torus (which is a double cover of $M$) as in  Figure \ref{fig}. We embed $M_1$ and $M_2$ into $N$ as illustrated in Figure \ref{fig} and obtain a homomorphism $\Phi: \Dc(M) \to \Dc(N)$.\\
\end{exmp}
We are now in position to give a proof of the main result of this section.

\end{subsection}

\begin{subsection}{Proof of Theorem \ref{cont}}
In order to prove Theorem \ref{cont}, we will need to make the following definitions:\\ 

\begin{definition}

%%%%

For any group $G \subseteq {\Dc}(N)$, we define: $$supp_1(G) = \{p \in N \text{ such that } g(p) \neq p \text{ for some }  g \in G\}$$  
\end{definition}

For each $x \in M$, let's define: $$S_x = \bigcap_{r > 0} supp_1(\Phi(G_{B_r}))$$ where $B_r$ is  an open ball of radius $r$ around $x$. We construct the sets $S_x$ thinking of constructing  the maps $\rho_i$ in the definition of \emph{ extended topologically diagonal}. We would like to show that $S_x$ consists of a discrete set of points and that the correspondence $S_x \to x $ give us a collection of coverings from some open sets of $N$ to $M$. \\

Let $N' := N \backslash \text{fix}(\Phi({\Dc}(M)))$  be the set of points in $N$ that are not fixed by the action of $\Dc(M)$.\\

\begin{lemma}\label{prop}
Let $\Phi: {\Dc}(M) \to {\Dc}(N)$ be a non-trivial  homomorphism,  for every $x \in M$ we have the following:

\begin{enumerate}

\item $S_x$ is a non-empty set.
\item For any $f \in {\Dc}(M)$, $\Phi(f)S_x = S_{f(x)}$.
\item $N' = \cup_{x \in M}  S_x$.

\end{enumerate}
\end{lemma}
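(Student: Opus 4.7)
The plan is to establish the three items in the order (3), (2), (1), with (1) falling out immediately from the other two together with the transitivity of $\Dc(M)$ on a connected manifold. It is worth noting at the outset that no step will use the weak continuity of $\Phi$; the lemma is formulated for all nontrivial homomorphisms, and that is what the argument reflects.

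For (3), the inclusion $\bigcup_x S_x \subseteq N'$ is immediate from the definitions, since any $p \in S_x$ is moved by some $\Phi(g)$ with $g \in \Dc(M)$. For the reverse inclusion I argue by contraposition: suppose $p \in N$ lies in no $S_x$. Then for each $x \in M$ one can choose a radius $r_x > 0$ such that every $g \in G_{U_{r_x}(x)}$ satisfies $\Phi(g)(p) = p$. The balls $\{U_{r_x}(x)\}_{x \in M}$ form an open cover of $M$, so by the Fragmentation Property any $f \in \Dc(M)$ factors as $f = g_1 \cdots g_k$ with each $g_i$ supported in one of these balls. Each $\Phi(g_i)$ then fixes $p$, and since $\Phi$ is a homomorphism $\Phi(f) = \Phi(g_1)\cdots \Phi(g_k)$ also fixes $p$, showing $p \notin N'$.

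For (2), the idea is equivariance under conjugation. Fix $p \in S_x$ and any radius $s > 0$ around the target point $f(x)$. By continuity of $f$ one can choose $r > 0$ with $f(U_r(x)) \subseteq U_s(f(x))$. Since $p \in S_x$, there exists $g \in G_{U_r(x)}$ with $\Phi(g)(p) \neq p$. Setting $h := fgf^{-1}$, one has $\text{supp}(h) = f(\text{supp}(g)) \subseteq U_s(f(x))$, so $h \in G_{U_s(f(x))}$, and
$$\Phi(h)\bigl(\Phi(f)(p)\bigr) = \Phi(f)\Phi(g)\Phi(f)^{-1}\Phi(f)(p) = \Phi(f)\Phi(g)(p) \neq \Phi(f)(p),$$
the last inequality holding because $\Phi(f)$ is a bijection. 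This gives $\Phi(f)(p) \in S_{f(x)}$, hence $\Phi(f)(S_x) \subseteq S_{f(x)}$; running the same argument with $f^{-1}$ in place of $f$ and the base point $f(x)$ yields the opposite inclusion, so the two sets are equal.

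Finally, (1) comes for free. Nontriviality of $\Phi$ forces $N' \neq \nil$, so by (3) some $S_{x_0}$ is nonempty. Since $M$ is connected, $\Dc(M)$ acts transitively on $M$, so for any $x \in M$ one may pick $f \in \Dc(M)$ with $f(x_0) = x$, and (2) gives $S_x = \Phi(f)(S_{x_0}) \neq \nil$. The most substantive step is (3), since it is the one that actually uses something nontrivial (fragmentation). The only mild subtlety is that the cover of $M$ used there is not given in advance but built from the pointwise data $\{r_x\}$; however, the fragmentation lemma as quoted in the paper applies to any open cover, so this presents no real obstacle.
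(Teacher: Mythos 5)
Your argument is correct, and for parts (1) and (2) it simply fills in what the paper leaves as a one-line remark (conjugation for (2), and transitivity of $\Dc(M)$ plus nontriviality for (1)). For part (3), however, you take a genuinely different route. The paper proves the inclusion $N' \subseteq \bigcup_x S_x$ \emph{directly}: given $p \in N'$, it repeatedly applies fragmentation to construct a nested sequence of balls $B_n$ of radius $1/n$ and diffeomorphisms $f_n \in G_{B_n}$ with $\Phi(f_n)(p) \ne p$, then takes $x = \bigcap_n B_n$ and checks $p \in S_x$. You instead argue by \emph{contraposition}: if $p$ lies in no $S_x$, unwind the definition of $S_x$ to get, for each $x$, a radius $r_x$ such that $G_{U_{r_x}(x)}$ acts trivially on $p$; cover $M$ by these balls and apply fragmentation once to conclude all of $\Phi(\Dc(M))$ fixes $p$. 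Both use fragmentation as the only nontrivial input, but yours avoids the iterative construction and the (minor) bookkeeping of showing the nested intersection of open balls is a single point; the paper's version is more constructive in that it explicitly exhibits the $x$ with $p \in S_x$. Either is a perfectly good proof. One small remark: your proof of (1) quietly uses that $M$ is connected so that $\Dc(M)$ acts transitively; this is a standing assumption in the paper (it is needed already for simplicity of $\Dc(M)$), but it is worth being explicit that the lemma would fail for disconnected $M$.
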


\begin{proof}

 We start with the proof of item (3). If $p \in N'$, there is an element $f_0$ in ${\Dc}(M)$ such that $\Phi(f_0)(p) \neq p$. Using the fragmentation property, we can express $f$ as a product of diffeomorphisms supported in balls of radius one an so we obtain a diffeomorphism $f_1$ supported in $B_1 \subset M$ such that $\Phi(f_1)(p) \neq p$. We can iterate this inductive procedure and construct a diffeomorphism $f_n \in {\Dc}(M)$ supported in a ball $B_n$ of radius at most $1/n$, such that $B_n \subset B_{n-1}$ and such that $\Phi(f_n)p \neq p$. We define $x := \bigcap_n B_{n}$. It is clear that $p \in S_x$, so item (3) follows.\\
 
Item (2) follows by conjugating $f$ and applying $\Phi$. Item (1) is a direct consequence of (2) and (3). 

\end{proof}

The following two lemmas are technical lemmas needed for the proof of Lemma \ref{inj2}.\\

\begin{lemma}\label{rn}

Given $x_0 \in M$ and $U \subseteq M$ an open set containing $x_0$, there exists a neighborhood ${\mathcal{N}}_{x_0}$ of $x_0$ in $M$  and a continuous map $\Psi:\mathcal{N}_{x_0} \to G_U$ such that for every $x \in {\mathcal{N}}_{x_0}$ we have that $\Psi(x)(x_0) = x$.

\end{lemma}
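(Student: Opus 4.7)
The plan is to realize $\Psi$ via the flows of a suitably chosen frame of compactly supported vector fields. Pick a coordinate chart $(V, \phi)$ with $x_0 \in V$, $\phi(x_0) = 0$, and $\overline{V} \subset U$. Using a bump function equal to $1$ near $x_0$ and compactly supported in $V$, pull back the Euclidean coordinate vector fields $\partial/\partial x_i$ to obtain smooth compactly supported vector fields $X_1, \dots, X_n$ on $M$ with $\mathrm{supp}(X_i) \subset U$ and with $X_1(x_0), \dots, X_n(x_0)$ forming a basis of $T_{x_0}M$ (in fact, they agree with the coordinate vectors in a neighborhood of $x_0$).

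Since each $X_i$ is compactly supported, its flow $\phi^{X_i}_t$ is defined for all $t \in \mathbb{R}$, belongs to $\Dc(M)$, and is supported in $U$; moreover $t \mapsto \phi^{X_i}_t$ is continuous into $\Dc(M)$ with the weak topology. Define
\[
F : \mathbb{R}^n \to G_U, \qquad F(t_1, \dots, t_n) = \phi^{X_1}_{t_1} \circ \phi^{X_2}_{t_2} \circ \cdots \circ \phi^{X_n}_{t_n}.
\]
This $F$ is continuous (composition in $\Dc$ is continuous on diffeomorphisms with a common compact support), and each $F(t)$ is supported in $U$, hence lies in $G_U$.

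Next consider the evaluation map $e : \mathbb{R}^n \to M$, $e(t) = F(t)(x_0)$. This is smooth, and because near $x_0$ each $X_i$ coincides with $\partial/\partial x_i$, the flow $\phi^{X_i}_{t_i}$ translates $x_0$ by $t_i e_i$ for small $t_i$; hence $de_0$ is the identity in the chart. By the inverse function theorem, there exist open neighborhoods $W$ of $0$ in $\mathbb{R}^n$ and $\mathcal{N}_{x_0}$ of $x_0$ in $M$ such that $e|_W : W \to \mathcal{N}_{x_0}$ is a diffeomorphism.

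Finally set $\Psi(x) := F\bigl(e|_W^{-1}(x)\bigr)$. Then $\Psi : \mathcal{N}_{x_0} \to G_U$ is continuous as a composition of continuous maps, and by construction $\Psi(x)(x_0) = e(e|_W^{-1}(x)) = x$, proving the lemma. The only mildly technical point is the continuity of the composition map $\Dc(M)^n \to \Dc(M)$, which holds because all factors can be chosen with support in a fixed compact subset of $U$, where the weak topology is metrizable and composition is continuous.
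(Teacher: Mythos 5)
Your proof is correct and follows essentially the same approach as the paper's: choose compactly supported vector fields supported in $U$ spanning $T_{x_0}M$, compose their flows to get a map from $\mathbb{R}^n$ to $G_U$, evaluate at $x_0$, invert locally via the inverse function theorem, and define $\Psi$ by pre-composing with that inverse. Your write-up is a bit more detailed on the bump-function construction and the continuity of composition, but the underlying argument is identical.
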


\begin{proof}

Using coordinate charts is enough to check the case  when $M = \R^n$ and $x_0 = 0$. For $1\leq i \leq n$, let $X_i$ be vector fields in $\R^n$ supported in $U$ such that $\{X_i(0)\}_i$ is a linearly independent set of vectors. If $f^i_t$ are the flows generated by $X_i$, we have that $f^i_t \in G_U $.\\

We define for $t = (t_1,t_2 \dots ,t_n) \in (-\epsilon,\epsilon)^n$, the map $\rho: (-\epsilon,\epsilon)^n \to \R^n$  by  $\rho(t) = f^1_{t_1}f^2_{t_2} \dots f^n_{t_n} (0)$. By our choice of $X_i$'s, for sufficiently small $\epsilon$, $\rho$ is  a diffeomorphism onto a small neighborhood $\mathcal{N}_0$ of $0 \in \R^n$.\\

If $\rho^{-1}(x) = (t_1(x),t_2(x), \dots ,t_n(x))$,  we can define  the map $\Psi:\mathcal{N}_{0} \to G_U$ by $\Psi(x) = f^1_{t_1(x)}f^2_{t_2(x)} \dots f^n_{t_n(x)}$. $\Psi$ is clearly continuous and $\Psi(x)(0) = x$.

\end{proof}

 \begin{lemma}\label{inj1}
 Let $\Phi: {\Dc}(M) \to {\D}(N)$ be a weakly continuous homomorphism and let $p$ be a point in $N$. There are no $n+1$ pairwise disjoint open subsets $U_i \subset M$, $i =1,2, \dots ,{n+1}$, such that every subgroup $\Phi(G_{U_i})$ does not fix $p$.

\end{lemma}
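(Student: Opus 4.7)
Suppose for contradiction that pairwise disjoint open sets $U_1, \ldots, U_{n+1} \subset M$ exist with $\Phi(G_{U_i})$ not fixing $p$ for every $i$. The plan is to produce $n+1$ pairwise commuting smooth vector fields on $N$, each nonzero at $p$, and to exploit the fact that $\dim N = n$ together with the simplicity of the groups $G_{U_i}$ to reach a contradiction. First I would use the classical fact (see \cite{B}) that $G_{U_i} = \Dc(U_i)$ is generated as an abstract group by its one-parameter subgroups: since $\Phi(G_{U_i})$ does not fix $p$, there must exist a compactly supported vector field $X_i$ with $\text{supp}(X_i) \subset U_i$ whose flow $f_i^t$ satisfies $\Phi(f_i^{t_0})(p) \neq p$ for some $t_0 \in \R$ (otherwise every flow in $G_{U_i}$ would have its $\Phi$-image fixing $p$, forcing $\Phi(G_{U_i})$ itself to fix $p$).

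By Corollary \ref{flux}, the one-parameter subgroup $t \mapsto \Phi(f_i^t)$ is a smooth flow on $N$ generated by a vector field $Y_i \in \mathfrak{X}(N)$. The condition $\Phi(f_i^{t_0})(p) \neq p$ then forces $Y_i(p) \neq 0$, since if $Y_i(p)=0$ the point $p$ would be a fixed point of the entire flow. Because the $U_i$ are pairwise disjoint, the $X_i$ pairwise commute, so the flows $f_i^t, f_j^s$ commute in $\Dc(M)$ for $i\ne j$; applying $\Phi$, the flows $\Phi(f_i^t), \Phi(f_j^s)$ commute in $\Dc(N)$, which gives $[Y_i, Y_j]=0$ for $i \neq j$.

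At this stage I have $n+1$ pairwise commuting smooth vector fields $Y_1, \ldots, Y_{n+1}$ on the $n$-dimensional manifold $N$, each nonzero at $p$. The joint map $\Theta: \R^{n+1} \to \Dc(N)$, $\Theta(t_1,\ldots,t_{n+1}) = \Phi(f_1^{t_1}) \circ \cdots \circ \Phi(f_{n+1}^{t_{n+1}})$, is a well-defined smooth action by commutativity; evaluating at $p$, its differential at the origin sends $e_i$ to $Y_i(p) \neq 0$. Since $T_pN$ has dimension only $n$, the vectors $Y_1(p),\ldots,Y_{n+1}(p)$ are linearly dependent, producing a nontrivial relation $\sum_i c_i Y_i(p) = 0$.

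The main obstacle will be converting this linear dependence into a genuine contradiction, since the vanishing of $Z := \sum_i c_i Y_i$ at $p$ alone merely produces a commuting flow that fixes $p$. To finish I plan to promote the argument by exploiting the freedom in choosing the $X_i$: for each $i$ the map $\rho_i : \mathfrak{X}_c(U_i) \to T_pN$, $X \mapsto Y_X(p)$, is linear with nonzero image $V_i$. Using the simplicity of each $G_{U_i}$ (Mather-Thurston) together with the commutation of the $\Phi(G_{U_j})$'s, I would identify an appropriate common orbit $O \ni p$ on which each $\Phi(G_{U_i})$ acts faithfully; combined with the injectivity of $\Phi$ and the fact that an element $\exp(tX) \ne \mathrm{id}$ for $X \ne 0$ and $t$ small, this should force $\rho_i$ to be injective. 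Since $\mathfrak{X}_c(U_i)$ is infinite-dimensional while $T_pO \subseteq T_pN$ is finite-dimensional, this yields the desired contradiction.
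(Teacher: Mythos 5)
Your opening moves match the paper's proof exactly: reducing to fluxes via fragmentation, using Corollary \ref{flux} to get smooth flows on $N$, extracting commuting vector fields $Y_i$ with $Y_i(p)\ne 0$, and observing that $n+1$ vectors in $T_pN$ must be linearly dependent. But the second half of your plan has a genuine gap, and it is not a repairable one.

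The map $\rho_i : \mathfrak{X}_c(U_i) \to T_p N$, $X\mapsto Y_X(p)$, can never be injective: its source is infinite-dimensional and its target is $n$-dimensional, so its kernel is automatically infinite-dimensional. No amount of simplicity or commutation can change this. The kernel of $\rho_i$ is precisely the set of $X$ whose flow maps under $\Phi$ to flows fixing $p$; that is a perfectly nonempty condition (it is a closed linear condition of codimension at most $n$), and the injectivity of $\Phi$ only tells you $\Phi(\exp(tX))\ne\mathrm{id}$, not that $\Phi(\exp(tX))$ moves the particular point $p$. So the intended final contradiction --- ``$\rho_i$ is injective from an infinite-dimensional space into a finite-dimensional one'' --- rests on a premise that is false by elementary linear algebra.

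The ingredient you are missing is the Frobenius/foliation reduction. The paper does not try to make any $\rho_i$ injective. Instead it picks a \emph{maximal} set $X_1,\dots,X_k$ ($k\le n$) of vector fields, coming from distinct $\Phi(G_{U_i})$'s, whose values at $p$ are linearly independent. Since $k\le n < n+1$, at least one block, say $G_{U_{n+1}}$, contributes none of the $X_i$; maximality then forces \emph{every} vector field $Y$ arising from a flux in $\Phi(G_{U_{n+1}})$ to satisfy $Y(p)\in\mathrm{span}\{X_i(p)\}$ and $[Y,X_i]=0$. By Frobenius, the $X_i$ integrate to a foliation near $p$; let $L$ be the leaf through $p$. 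Because $[Y,X_i]=0$ and $Y(p)$ lies in the leaf direction, $Y$ is tangent to $L$ everywhere on $L$ and in fact restricts to a \emph{constant-coefficient} linear combination of the $X_i$'s along $L$ (push $Y(p)$ around $L$ by the commuting flows of the $X_i$). Hence $\Phi(G_{U_{n+1}})$ preserves $L$ and, restricted to $L$, its generating flows all lie in the abelian group generated by the flows of $X_1,\dots,X_k$. This gives a nontrivial homomorphism from $G_{U_{n+1}}$ to an abelian group, contradicting simplicity. In short: the contradiction comes from forcing the induced action of one whole block $G_{U_{n+1}}$ to be abelian on a leaf, not from forcing any linearization map to be injective.
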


\begin{proof}

Suppose that such a point $p$ exists. As a consequence of the simplicity of $G_{U_i}$, any element of $G_{U_i}$ can be written as a product of a finite number of flows. Therefore, for every $1 \leq i \leq n+1$, there exists a flow $f^i_t$ supported in $U_i$, such that $\Phi(f^i_t)$ does not fix $p$. Observe that the vector fields $X_i$ generating the flows $\Phi(f^i_t)$ satisfy that $X_i(p) \neq 0$. Also,  as $\Phi(f^i_t)$ and  $\Phi(f^j_t)$ commute, we have that $[X_i,X_j] = 0$ for every $i,j$.\\

Let's take a maximal collection of vector fields $\mathcal{C} = \{X_1, X_2, \dots , X_k\}$ in $N$  such that $X_1(p), X_2(p), \dots ,X_k(p)$ are linearly independent and such that all the $X_j$'s comes from flows in different $\Phi(G_{U_i})$'s. Observe that $k$ must be less or equal to $n$, therefore we can assume w.l.o.g. that if $f_t$ is any flow in $G_{U_{n+1}}$ and $Y$ is the vector field corresponding to the flow $\Phi(f_t)$, then $Y(p)$ is a linear combination of elements in $\{X_1(p), X_2(p), \dots ,X_k(p)\}$.\\

If we consider the open set: $$U= \{ q \in N \mid X_1(q), X_2(q), \dots , X_k(q) \text{ are linearly independent}\}$$ We have that the vector fields  in $\mathcal{C}$ define a foliation $F$ on $U$  by Frobenius integrability theorem. Let $L$ be the leaf  of $F$ containing $p$. Observe that as $Y(p)$ is a linear combination of elements of $\{X_1(p), X_2(p), \dots ,X_k(p)\}$ and $[Y, X_i] = 0$, then we have that $Y$ is tangent everywhere to the leaves defined by $\mathcal{C}$. It also follows  that if $q \in U$, then $\Phi(g_t) q \in U$ for every $t$. In conclusion, we have that  $\Phi(g_t)$ necessarily preserves $L$.\\

As the diffeomorphism $g_t$ was an arbitrary flow in $G_{U_{n+1}}$, we have obtained a non-trivial homomorphism $\Psi: G_{U_{n+1}} \to {\D}(L)$. Even more, the image of this map is abelian, since if $Y$ and $Z$ are vector fields coming from $\Phi(G_{U_{n+1}})$, both $Y$ and $Z$ commute with all the elements of $\mathcal{C}$, and therefore $Y$ and $Z$ can be shown to be linear combination of  vector fields in $\mathcal{C}$. This contradicts the simplicity of $G_{U_{n+1}}$.\\

\end{proof}

\begin{lemma}\label{inj2} If $x,y$ are 2 distinct points on $M$, then $S_x\cap S_y = \emptyset $.

\end{lemma}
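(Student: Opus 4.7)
The plan is to argue by contradiction: suppose $p \in S_x \cap S_y$ for distinct $x, y \in M$. I will construct $n+1$ pairwise disjoint open subsets of $M$ whose $\Phi$-images each fail to fix $p$, contradicting Lemma \ref{inj1}. Pick disjoint open balls $U \ni x$ and $V \ni y$, and set $W_0 := U$: since $p \in S_x$, $\Phi(G_U)$ does not fix $p$, so $W_0$ provides the first of the $n+1$ required sets. The remaining sets $W_1, \ldots, W_n$ will be chosen inside $V$, hence automatically disjoint from $W_0$.

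To construct these $W_i$'s, apply Lemma \ref{rn} at $y$ with the open set $V$ to obtain a neighborhood $\mathcal{N}_y \subset V$ and a continuous section $\Psi : \mathcal{N}_y \to G_V$ with $\Psi(z)(y) = z$. Since every $\Psi(z)$ is supported in $V$ and hence fixes $x$, Lemma \ref{prop}(2) gives $\Phi(\Psi(z)) \cdot S_x = S_x$ and $\Phi(\Psi(z)) \cdot S_y = S_z$; therefore the continuous map $F : \mathcal{N}_y \to N$ defined by $F(z) := \Phi(\Psi(z))(p)$ takes values in $S_x \cap S_z$, with $F(y) = p$. The goal is to choose $n$ distinct points $y_1 = y, y_2, \ldots, y_n \in \mathcal{N}_y$ and elements $h_i \in G_V$ satisfying $h_i(y) = y_i$ and $\Phi(h_i)(p) = p$. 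Granting this, pick pairwise disjoint balls $W_i \subset V$ centered at $y_i$. For each $i$, $h_i^{-1}(W_i)$ is a neighborhood of $y$, so $p \in S_y$ yields $w \in G_{h_i^{-1}(W_i)}$ with $\Phi(w)(p) \neq p$, and the conjugate $h_i w h_i^{-1} \in G_{W_i}$ then satisfies
\[
\Phi(h_i w h_i^{-1})(p) = \Phi(h_i)\,\Phi(w)\,\Phi(h_i)^{-1}(p) = \Phi(h_i)\,\Phi(w)(p) \neq \Phi(h_i)(p) = p,
\]
using $\Phi(h_i)(p) = p$ (so $\Phi(h_i)^{-1}(p) = p$ as well) and the injectivity of $\Phi(h_i)$. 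Hence $\Phi(G_{W_i})$ does not fix $p$, and together with $W_0$ we have the $n+1$ disjoint open sets needed to contradict Lemma \ref{inj1}.

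The main obstacle is the existence of the $h_i$'s, which amounts to showing that $F$ is locally constant equal to $p$ at $y$ --- equivalently, that the orbit of $y$ under the stabilizer $H := \{\phi \in G_V : \Phi(\phi)(p) = p\}$ contains a neighborhood of $y$ in $V$. The plan is a dichotomy: if $F$ is locally constant, take $h_i := \Psi(y_i)$ for any choice of nearby $y_i$'s. Otherwise, there is $z_0$ arbitrarily close to $y$ with $q := F(z_0) \in S_x \cap S_{z_0}$ and $q \neq p$; one then iterates the entire construction at the triple $(x, z_0, q)$ in place of $(x, y, p)$. Using Corollary \ref{flux} together with Theorem \ref{mz}, the map $F$ can be promoted to a smooth map (as $\Psi$ is a finite composition of smooth flows in coordinates), so that in the non-constant case its image contains a positive-dimensional smooth piece of $S_x$ near $p$; combined with the bound coming from Lemma \ref{inj1} (namely, that each point of $N$ lies in at most $n$ of the sets $S_z$), this forces a contradiction on some iterate. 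The delicate step is making this iteration and smoothness bookkeeping fully rigorous.
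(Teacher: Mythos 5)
The approach in your proposal diverges substantially from the paper's, and it contains a genuine gap that you yourself flag at the end. The reduction in your first two paragraphs is fine: if one can find $n$ distinct points $y_1,\dots,y_n$ near $y$ and elements $h_i\in G_V$ with $h_i(y)=y_i$ and $\Phi(h_i)(p)=p$, then conjugating a suitable $w$ by $h_i$ produces $n$ further disjoint open sets $W_i$ whose $\Phi$-images move $p$, and together with $W_0\ni x$ this contradicts Lemma \ref{inj1}. The problem is that everything rides on $F(z)=\Phi(\Psi(z))(p)$ being locally constant at $y$, and there is no a priori reason for this to hold. Showing $F$ is locally constant is essentially equivalent to showing $S_x$ is discrete near $p$, which is part of what the classification theorem is trying to establish; taking it as an input is close to circular. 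Your proposed escape --- iterate the construction at $(x,z_0,q)$ whenever $F$ is non-constant, and claim a contradiction because the smooth non-constant $F$ puts a ``positive-dimensional smooth piece'' inside $S_x$ --- does not close the gap. Lemma \ref{inj1} only bounds how many distinct $S_z$ can pass through a single point; it places no constraint on the dimension of any individual $S_x$, and indeed in the general setting (e.g.\ $\dim N = 2\dim M$ with a diagonal-type action) the sets $S_x$ are genuinely positive-dimensional and no contradiction arises. The iteration also has no clear termination criterion.

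The paper avoids this entirely by a different device: it lets $k$ be the largest integer for which some $S_{x_1}\cap\cdots\cap S_{x_k}$ is nonempty (finite by Lemma \ref{inj1}), then uses Lemma \ref{rn} with disjointly supported sections $\Psi_i$ at each $x_i$ to build a continuous map $\Psi:\prod_i\mathcal N_{x_i}\to N$ landing in $S_{y_1}\cap\cdots\cap S_{y_k}$. Injectivity of $\Psi$ comes for free from the \emph{maximality} of $k$ --- if two distinct $k$-tuples had the same image, that image point would lie in more than $k$ distinct $S_z$'s --- and then invariance of domain gives $k\cdot\dim M\le\dim N$, forcing $k=1$ under the theorem's dimension hypothesis. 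In other words, where you need $F$ to be locally constant (a property you cannot yet access), the paper substitutes a maximality argument plus a dimension count, which requires nothing about the local structure of $F$. I would recommend replacing the dichotomy-and-iteration step with this maximality trick; the first half of your construction can then be discarded, since once $km\le n$ is established the conclusion is immediate.
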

 
\begin{proof}

From Lemma \ref{inj1},  for any $n+1$ different points $x_1, x_2 \dots ,x_{n+1}$ in $M$ we have that $S_{x_1} \cap S_{x_2}\cap  \dots \cap S_{x_{n+1}} = \emptyset$. Let $k$  be the largest integer such that there exist  $x_1, x_2, \dots ,x_k$ distinct points on $M$ such that $S_{x_1} \cap S_{x_2}\cap  \dots \cap S_{x_{k}} \neq \emptyset$. We are going to show that $k=1$.\\

Take $p$ be an arbitrary point  in $S_{x_1} \cap S_{x_2}\cap  \dots \cap S_{x_{k}}$. By Lemma \ref{rn}, we can find pairwise disjoint neighborhoods $\mathcal{N}_{x_i}$ of $M$ and continuous maps $\Psi_i: \mathcal{N}_{x_i} \to \Dc(M)$, such that $\Psi_i(y)(x_i) = y$ for every $y \in \mathcal{N}_{x_i}$. Even more, we can suppose that the support of the elements in the image of $\Psi_i$ and $\Psi_j$ have disjoint support if $i\neq j$.\\

Let $V = \underset{i}\prod\mathcal{N}_{x_i} \subseteq M^k$, we define the map $\Psi: V  \to N$ given by: $$\Psi(y_1,y_2,  \dots ,y_k) =\Phi( \underset{i}\prod \Psi_i(y_i))(p)$$

$\Psi$ is continuous because $\Phi$ and all the $\Psi_i$'s are continuous. Observe that  $(\underset{i}\prod \Psi_i(y_i)) (x_l) = y_l $ for every $ 1\leq l\leq k$, therefore $\Phi( \underset{i}\prod \Psi_i(y_i))(p) \in  S_{y_1} \cap S_{y_2}\cap  \dots \cap S_{y_{k}}$ by property $(2)$ in Lemma \ref{prop}. By maximality of $k$, we obtain that   
$\Psi$ is injective. It follows then that $\dim(M^k) \leq \dim(N)$ and therefore $k=1$ and $\dim(M)= \dim(N)$.

\end{proof}

As a corollary of the last paragraph we obtain:

\begin{corollary}
If $\Phi: {\Dc}(M) \to {\D}_c(N)$ is a weakly continuous homomorphism and $\dim(N) < \dim(M)$, then $\Phi$ is trivial.
\end{corollary}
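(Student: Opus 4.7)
The plan is to run the same injectivity argument as in Lemma~\ref{inj2}, but without forcing $k=1$ at the end. Assume for contradiction that $\Phi$ is non-trivial. By simplicity of $\Dc(M)$ the homomorphism $\Phi$ is injective, so $\Phi(\Dc(M))$ has a non-fixed point, i.e.\ $N' = N\setminus\mathrm{fix}(\Phi(\Dc(M))) \neq \emptyset$. By Lemma~\ref{prop}(3) there exists at least one $x_1\in M$ with $S_{x_1}\neq\emptyset$, so the integer
\[
k := \max\bigl\{\,j \geq 1 : \exists\, x_1,\dots,x_j \text{ distinct in } M \text{ with } S_{x_1}\cap\cdots\cap S_{x_j}\neq\emptyset\,\bigr\}
\]
is well defined and satisfies $k\geq 1$; it is bounded above by $n+1$ thanks to Lemma~\ref{inj1}.

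Next I would copy verbatim the construction from the last paragraph of the proof of Lemma~\ref{inj2}: pick $p\in S_{x_1}\cap\cdots\cap S_{x_k}$, select pairwise disjoint neighborhoods $\mathcal{N}_{x_i}$, and use Lemma~\ref{rn} to obtain continuous sections $\Psi_i:\mathcal{N}_{x_i}\to\Dc(M)$ with pairwise disjoint supports and $\Psi_i(y)(x_i)=y$. Define
\[
\Psi : V := \prod_i \mathcal{N}_{x_i} \longrightarrow N,\qquad \Psi(y_1,\dots,y_k) = \Phi\Bigl(\prod_i \Psi_i(y_i)\Bigr)(p).
\]
Since $\Phi$ is weakly continuous and the $\Psi_i$ are continuous, $\Psi$ is continuous. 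By property~(2) of Lemma~\ref{prop}, $\Psi(y_1,\dots,y_k)\in S_{y_1}\cap\cdots\cap S_{y_k}$, so the maximality of $k$ forces $\Psi$ to be injective: if two tuples were sent to the same point, combining them would produce a $(k{+}1)$-fold intersection of $S$-sets that is non-empty, contradicting maximality.

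Finally, injectivity of a continuous map between manifolds yields $\dim(V) \leq \dim(N)$, i.e.\ $k\cdot\dim(M) \leq \dim(N)$. Combined with $k\geq 1$ this gives $\dim(M)\leq\dim(N)$, directly contradicting the hypothesis $\dim(N)<\dim(M)$. Therefore $\Phi$ must be trivial. The only step that requires any real thought is the previous lemma, which is already in hand; the present corollary is then essentially a bookkeeping observation that the inequality $k\dim(M)\leq\dim(N)$ established there becomes non-trivial whenever $\Phi$ is non-trivial.
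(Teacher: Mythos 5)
Your proposal is correct and is essentially identical to the paper's: the paper explicitly presents this corollary with the words ``As a corollary of the last paragraph [of the proof of Lemma~\ref{inj2}] we obtain,'' and that last paragraph is exactly the argument you reproduce, yielding $k\cdot\dim(M)\leq\dim(N)$ with $k\geq 1$ whenever $\Phi$ is nontrivial. Nothing more to add.
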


 \begin{corollary} If $\Phi: {\Dc}(M) \to {\Dc}(N)$ is a weakly continuous homomorphism of groups and $\dim(M) = \dim(N)$, then for each $p \in N'$ we have that $\Phi(\Dc(M))p$ is an open set of $M$.
  
 \end{corollary}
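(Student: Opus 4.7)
The plan is to exhibit an explicit continuous injective map from a neighborhood in $M$ into the $\Phi$-orbit of $p$ and then apply Brouwer's invariance of domain, using in a crucial way the fact that the sets $S_x$ have just been shown to be pairwise disjoint.

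Fix $p \in N'$. By Lemma \ref{prop}(3) there is some $x_0 \in M$ with $p \in S_{x_0}$. Pick any open set $U \subseteq M$ containing $x_0$; by Lemma \ref{rn} there is an open neighborhood $\mathcal{N}_{x_0}$ of $x_0$ and a continuous map $\Psi : \mathcal{N}_{x_0} \to G_U$ with $\Psi(y)(x_0) = y$ for every $y \in \mathcal{N}_{x_0}$. Define
\[
F : \mathcal{N}_{x_0} \longrightarrow N, \qquad F(y) = \Phi(\Psi(y))(p).
\]
Clearly $F(\mathcal{N}_{x_0}) \subseteq \Phi(\Dc(M))\,p$.

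Next I would check that $F$ is continuous. If $y_n \to y$ in $\mathcal{N}_{x_0}$, then $\Psi(y_n)\Psi(y)^{-1} \to \mathrm{Id}$ in $\Dc(M)$ with all diffeomorphisms involved supported in the fixed compact set $\overline{U}$. By weak continuity of $\Phi$, $\Phi(\Psi(y_n)\Psi(y)^{-1}) \to \mathrm{Id}$ in $\Dc(N)$, and evaluating at $\Phi(\Psi(y))(p)$ gives $F(y_n) \to F(y)$. Injectivity of $F$ is the key point: if $F(y_1) = F(y_2) = q$, then $q = \Phi(\Psi(y_i))(p)$, and since $p \in S_{x_0}$, Lemma \ref{prop}(2) gives $q \in S_{\Psi(y_i)(x_0)} = S_{y_i}$ for $i=1,2$; hence $S_{y_1} \cap S_{y_2} \neq \emptyset$, which forces $y_1 = y_2$ by Lemma \ref{inj2}.

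Thus $F$ is a continuous injection from an open subset of the $m$-manifold $M$ into the $n$-manifold $N$ with $m = n$. By Brouwer's invariance of domain, $F(\mathcal{N}_{x_0})$ is open in $N$ and contains $p = F(x_0)$. So the orbit $\Phi(\Dc(M))\,p$ contains an open neighborhood of $p$. Finally, for any other point $q = \Phi(f)(p)$ in this orbit, the diffeomorphism $\Phi(f)$ carries the open neighborhood of $p$ found above onto an open neighborhood of $q$ which still lies inside the orbit; hence $\Phi(\Dc(M))\,p$ is open in $N$.

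The main subtlety I anticipate is the continuity verification of $F$: one must be careful that the path $y \mapsto \Psi(y)$ stays in a uniformly compactly supported family, so that weak continuity of $\Phi$ applies; this is precisely why Lemma \ref{rn} produces maps landing in a single $G_U$ with $U$ chosen in advance. Invariance of domain is what converts the \emph{qualitative} injectivity coming from Lemma \ref{inj2} into the \emph{openness} statement we need.
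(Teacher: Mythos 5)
Your proof is correct and makes explicit exactly what the paper leaves implicit: the map $y \mapsto \Phi(\Psi(y))(p)$ from Lemma \ref{rn} is precisely the $k=1$ specialization of the map $\Psi$ constructed in the proof of Lemma \ref{inj2}, and the paper is tacitly invoking the same continuity, injectivity (via Lemma \ref{inj2} and Lemma \ref{prop}(2)), and invariance-of-domain argument you spell out. The one point you handle with more care than the paper is the continuity of $F$ via the factorization $\Psi(y_n)\Psi(y)^{-1} \to \mathrm{Id}$ in a fixed $G_U$, which is the right way to make weak continuity of $\Phi$ applicable; this matches the paper's intent.
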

 
Using the previous corollary, we obtain that for our set $N':= N \setminus \text{fix}(\Phi(\Dc(M)))$ defined before Lemma \ref{prop}, we have that $N' = \cup_iU_i$ where each  $U_i$ is an open set defined as $U_i := \Phi(\D(M))p_i$ for some point $p_i \in N'$.  Observe that each  $U_i$ is a connected open set where the action of $\Dc(M)$ is transitive. To finish the proof of Theorem \ref{cont}, we just need to show the following: 
   
 \begin{lemma} 
 
 If $\Phi(\D_c(M))$ acts transitively on $N$, then there is a  $C^{\infty}$ covering map $\pi: N \to M$  such that for every $f \in \D_c(M)$ the following diagram commutes:
 \[
 \begin{CD}
N @>{\Phi(f)}>> N \\
 @V{\pi}VV          @VV{\pi}V \\
M @>>{f}> M
 \end{CD}
\]
 
 In other words, $\Phi = {\pi}^{*}$ as in subsection \ref{covers}.
 
 \end{lemma}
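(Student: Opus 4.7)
The idea is to define $\pi: N \to M$ by setting $\pi(p) = x$ if and only if $p \in S_x$. This is well-defined and surjective: Lemma \ref{inj2} gives disjointness of the $S_x$, while Lemma \ref{prop}(3) combined with $N = N'$ (from transitivity of the $\Phi(\Dc(M))$-action) shows that every point of $N$ lies in some $S_x$. The intertwining $\pi \circ \Phi(f) = f \circ \pi$ is immediate from Lemma \ref{prop}(2), so the conclusion ``$\Phi = \pi^{*}$'' will follow once $\pi$ is shown to be a $C^{\infty}$ covering map.

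Continuity of $\pi$ is proved via continuous local sections. Given $p_0 \in N$ with $x_0 = \pi(p_0)$, Lemma \ref{rn} provides a continuous $\Psi: \mathcal{N}_{x_0} \to G_U$ with $\Psi(x)(x_0)=x$; set $\sigma_{p_0}(x) = \Phi(\Psi(x))(p_0)$. Weak continuity of $\Phi$ makes $\sigma_{p_0}$ continuous, Lemma \ref{prop}(2) places $\sigma_{p_0}(x)$ in $S_x$ (so $\pi\circ\sigma_{p_0}=Id$), and Lemma \ref{inj2} forces injectivity. Since $\dim(M) = \dim(N)$, invariance of domain turns $\sigma_{p_0}$ into a homeomorphism onto an open neighborhood of $p_0$ whose inverse equals the restriction of $\pi$. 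Hence $\pi$ is continuous, each fiber $S_x = \pi^{-1}(x)$ is discrete, and, since $N$ is compact, each fiber is finite.

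For smoothness, recall from the proof of Lemma \ref{rn} that $\Psi$ has the form $\Psi(x) = f^{1}_{t_1(x)} \circ \cdots \circ f^{n}_{t_n(x)}$ with smooth parameters $t_i(x)$ and fluxes $f^i_t$. Corollary \ref{flux} turns each $\Phi(f_t^i)$ into a smooth one-parameter flow, and Theorem \ref{mz} of Montgomery--Zippin then makes the joint evaluation $(t_1,\ldots,t_n,q) \mapsto \Phi(f_{t_1}^1)\cdots\Phi(f_{t_n}^n)(q)$ smooth, so $\sigma_{p_0}$ is $C^{\infty}$. Let $k$ denote the maximum rank of $d\sigma_{p_0}$. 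By upper semicontinuity the rank equals $k$ on a neighborhood $U$ of some $x_1$, and the constant rank theorem identifies $\sigma_{p_0}(U)$ with a $k$-dimensional immersed submanifold of $N$; but $\sigma_{p_0}(U)$ is also open in $N$ by invariance of domain, so $k = n$ and $\sigma_{p_0}$ is a local diffeomorphism at $x_1$. In particular, $\pi$ is smooth on a neighborhood of $p_1 := \sigma_{p_0}(x_1)$.

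Smoothness then propagates to all of $N$ via equivariance: for any $p' \in N$, transitivity provides $g \in \Dc(M)$ with $p' = \Phi(g)(p_1)$, and the identity $\pi = g \circ \pi \circ \Phi(g)^{-1}$ makes $\pi$ smooth near $p'$ as a composition of smooth maps. Once $\pi$ is $C^{\infty}$ everywhere, differentiating $\pi \circ \sigma_{p_0} = Id$ forces $d\sigma_{p_0}$ to have rank $n$ at every point, so $\pi$ is a surjective local diffeomorphism with finite fibers. Compactness of $N$ now yields evenly covered neighborhoods: around any $x \in M$, separate the finitely many preimages by disjoint open sets on which $\pi$ is a diffeomorphism and shrink their common image by removing the (compact) image of the complement. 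The principal obstacle in the argument is showing that the maximum rank of $\sigma_{p_0}$ equals $n$; here invariance of domain does the decisive work, upgrading continuous injectivity to a local diffeomorphism property at one point, from which equivariance spreads the conclusion everywhere.
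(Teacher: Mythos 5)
Your construction of $\pi$ and the local sections $\sigma_{p_0}$ (the paper calls them $\rho$) matches the paper exactly, as do the arguments for well-definedness, continuity, and injectivity. Where you diverge is in the smoothness argument. The paper establishes that $\rho$ is $C^{\infty}$ by showing that the image of any $C^{\infty}$ curve through $x$ is a $C^{\infty}$ curve --- it takes the curve as the flow line of a compactly supported vector field $X$, applies Corollary~\ref{flux} to get a smooth flux $\Phi(f_t)$, and observes $\rho(f_t(x)) = \Phi(f_t)p$ for small $t$; this implicitly invokes a curve-detection criterion (Boman's theorem) to get smoothness of $\rho$. You instead unpack the explicit form $\Psi(x) = f^1_{t_1(x)}\cdots f^n_{t_n(x)}$ from the proof of Lemma~\ref{rn}, apply Montgomery--Zippin to each factor, and conclude directly. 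More importantly, your proof fills a genuine gap: the paper asserts that it suffices to show ``$\rho$ is a $C^{\infty}$ diffeomorphism onto its image'' and then only argues $\rho$ is a smooth injection --- a smooth injection between equidimensional manifolds need not have a smooth inverse ($x\mapsto x^3$), so nonsingularity of $d\rho$ is a real issue that the paper never addresses. Your constant-rank argument at a maximal-rank point combined with invariance of domain, followed by propagation via the equivariance $\pi = g\circ\pi\circ\Phi(g)^{-1}$ and transitivity, is exactly the missing ingredient. The final step producing evenly covered neighborhoods from finiteness of the fibers and compactness of $N$ is also correct. So: same strategy, cleaner smoothness input, and a correct resolution of a point the paper glosses over.
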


 \begin{proof}
 
 Let $\pi: N \to M$ be the map defined by $\pi(p) = x$ if $p \in S_x$. Observe that the map is well defined by Lemma \ref{inj2} and is surjective by our transitivity assumption. Next, we will  show that $\pi$ is locally a $C^{\infty}$ diffeomorphism.\\ 
 
 Using Lemma \ref{rn}, we can find a neighborhood $\mathcal{N}_{x}$ of $M$ and a continuous map $\Psi: \mathcal{N}_{x} \to \Dc(M)$ such that $\Psi(y)(x) = y$ for every $y \in \mathcal{N}_{x}$. We can define the map $\rho: \mathcal{N}_{x}  \to N$ given by $$\rho(y) =\Phi(\Psi(y))(p)$$
 
 Observe that $\rho$ is continuous because $\Phi$ is weakly continuous and injective because $\rho(y) \in S_y$ and Lemma \ref{inj2}. We also have that $ \pi\circ\rho= \text{Id}$ which implies that $\pi$ is a covering map. To show that $\pi$ is a $C^{\infty}$ map it is enough to show that  $\rho$ is  a $C^{\infty}$ diffeomorphism onto its image.\\
 
 The fact that $\rho$ is smooth follows from Lemma \ref{mz}: It's enough to prove that the image of a $C^{\infty}$ embedded curve $\gamma$ passing through $x$ is  a $C^{\infty}$ curve. Given $\gamma$,  we can construct a $C^{\infty}$ compactly supported vector field $X$ such that $ \gamma$ is a flow line of this vector field. $X$ defines a flow $f_t$ and by Lemma \ref{flow} we have that $\Phi(f_t)$ is a $C^{\infty}$ flow. Observe that $\rho(f_t(x)) \in S_{f_t(x)} = \Phi(f_t)S_x$. Using that $\rho$ is locally a homeomorphism for $t$ sufficiently small we have that $\rho(f_t(x)) =  \Phi(f_t)p$ and therefore $\rho(\gamma)$ is smooth. 
 
 \end{proof}
 
 \end{subsection}

\end{section}

%%%%%%%%%%%%%%%

\begin{section}{Blow-ups and gluing of actions}\label{blowups}

In the proof of Theorem \ref{1} we are going to make use of  the following two constructions on group actions on manifolds:

\begin{subsection}{Blow-ups}\label{blowups1}

Given  a closed manifold $N$ and a closed embedded submanifold $L \subset N$ of positive codimension, let $\Dc(N,L)$ be the group of diffeomorphisms of $N$ preserving  $L$ setwise. Suppose we have a group $G \subseteq \Dc(N,L)$. We will show how to construct a natural action of $G$ in $N^{\sigma}$, where $N^{\sigma}$ is the manifold obtained by blowing up $N$ along $L$.\\
 
Intuitively, the blow-up $N^{\sigma}$ is obtained by compactifying $N \backslash L$ by the unit normal bundle of $L$ in $N$. More concretely,  let fix a Riemannian metric on $N$ and let $N(L)$ be the unit normal bundle of our submanifold $L$ in that metric. The normal neighborhood theorem states that the map: $$p_0: N(L)\times(0,\epsilon) \to N$$ Defined by sending $(v,t)$ to the point $\gamma_v(t)$ in $N$ at distance $t$ along the geodesic with initial vector $v$, is a diffeomorphism onto a neighborhood of $L$ for  sufficiently small $\epsilon$.\\

We now define formally $N^\sigma$,  the  \emph{blow-up} of $N$ along $L$, as follows:  $$N^\sigma = (N(L)\times[0,\epsilon)) \cup (N\backslash L) / p_0.$$

 $N^{\sigma}$ is a compact manifold with $N(L)$ as its boundary and there is a canonical smooth map $\pi: N^{\sigma} \to N$. The smooth structure on the blow-up $N^{\sigma}$ does not depend on the metric, $N^{\sigma}$ can be also described as the compactification of $N \backslash L$ by the set of directions up to positive scale in $ \underset{q \in L}\cup (T_qN / T_qL)$.\\
  
In order to describe the smooth structure on $N^{\sigma}$ in more detail, suppose for the rest of this section that $L$ consists of a single point $p$ (the more general case can be treated in a similar way).\\ 

Let $x_1,x_2, \dots , x_n$ be coordinate charts around $p$ in $N$, where $x_i(p) = 0$ for every $1\leq i \leq n$ and let $v$ be a direction up to positive scale in $T_p(N)$. Without loss of generality, assume that in our coordinate charts we have that $v = \langle v_1,v_2, \dots ,v_{n-1},1\rangle$. Then, there is a corresponding set of coordinate charts $\xi_1,\xi_2, \dots ,\xi_{n-1}, x_n$  around $v$ in $N^{\sigma}$, where $v$ corresponds to the point $\xi_i = v_i$, $x_n = 0$ and the change of coordinates for $x_n > 0$ is given by $x_j = x_n\xi_j$.\\

The next lemma is the main fact that we need about blow-ups of actions and implies that for a given group action $\Phi: G \to \Dc(N)$, preserving a submanifold $L$, there is a corresponding action $\Phi': G \to \Dc(N^{\sigma})$ in the corresponding blow-up $N^{\sigma}$, more precisely:

\begin{lemma}\label{blow3}

There is a ``blow-up'' map $\sigma: \Dc(N, L) \to \Dc(N^{\sigma})$ such that for every $h \in \Dc(N, L)$ the following diagram commutes:

\[
 \begin{CD}
 N^{\sigma} @>{\sigma(h)}>>  N^{\sigma} \\
 @V{\pi}VV          @VV{\pi}V \\
N @>>{h}> N
 \end{CD}
\]

Even more, $\sigma$ is a group homomorphism.

\end{lemma}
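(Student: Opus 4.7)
The plan is to define $h^\sigma$ piecewise on the open interior and the boundary of $N^\sigma$, then verify smoothness across the boundary. On the interior $N^\sigma \setminus \partial N^\sigma$, canonically identified with $N \setminus L$ via $\pi$, I set $h^\sigma = h$; this is a diffeomorphism of $N \setminus L$ onto itself because $h$ preserves $L$ setwise. On the boundary $\partial N^\sigma = N(L)$, viewed intrinsically as the quotient of $(TN|_L \setminus TL)$ by positive scalars, I define $h^\sigma(q,[v]) = (h(q), [\overline{Dh_q}(v)])$, where $\overline{Dh_q} \colon T_qN/T_qL \to T_{h(q)}N/T_{h(q)}L$ is the linear isomorphism induced by $Dh_q$. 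This is well-defined because $h$ preserves $L$, and it descends to the positive-scalar quotient because $\overline{Dh_q}$ is linear and invertible.

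The main step is smoothness across $\partial N^\sigma$, which I would verify in the explicit blow-up coordinates from the excerpt. Treat the case $L = \{p\}$ (so necessarily $h(p) = p$); the general case is analogous, applying Hadamard only in the normal directions. Since each component $h_j$ in coordinates centered at $p$ vanishes at $0$, Hadamard's lemma gives $h_j(x) = \sum_i x_i g_{ji}(x)$ with $g_{ji}$ smooth. Substituting $x_j = x_n \xi_j$ for $j < n$ produces $h_j(\xi, x_n) = x_n H_j(\xi, x_n)$ with $H_j$ smooth, and a direct computation shows $H_j(\xi, 0)$ equals the $j$-th component of $Dh_p(v)$ at $v = (\xi_1, \ldots, \xi_{n-1}, 1)$. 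After choosing target blow-up charts so that the $n$-th coordinate of $Dh_p(v)$ is nonzero on the chart (always possible by covering $\partial N^\sigma$ by finitely many such charts), the induced map in blow-up coordinates becomes $(\xi, x_n) \mapsto (H_1/H_n, \ldots, H_{n-1}/H_n, x_n H_n)$, which is manifestly smooth. Applying the same construction to $h^{-1}$ yields a smooth inverse, so $h^\sigma \in \Dc(N^\sigma)$.

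For the homomorphism property, both $(h_1 h_2)^\sigma$ and $h_1^\sigma \circ h_2^\sigma$ restrict to $h_1 \circ h_2$ on the dense open interior and are continuous on all of $N^\sigma$, hence they coincide everywhere. Commutativity of the diagram $\pi \circ h^\sigma = h \circ \pi$ is immediate from the construction on the interior and extends to the boundary by continuity of $\pi$. The only subtle point is the smoothness verification, and the Hadamard factorization handles it cleanly: the vanishing of each component $h_j$ along $L$ produces exactly the factor of $x_n$ needed to cancel the denominator introduced by the blow-up coordinate relation $x_j = x_n \xi_j$, so the formulas extend smoothly across $\partial N^\sigma$ rather than being singular there.
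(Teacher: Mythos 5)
Your proof is correct and follows essentially the same route as the paper's: define $h^\sigma$ on $\partial N^\sigma$ as the projectivization of $Dh_p$, then verify smoothness in the explicit blow-up charts by expressing the quotient $h_i(x_n\xi,x_n)/h_j(x_n\xi,x_n)$ so that the shared factor of $x_n$ cancels. Your invocation of Hadamard's lemma to extract that factor, and of density-plus-continuity (rather than an explicit chain-rule check on the boundary) for the homomorphism property, is a cleaner packaging of the same argument, not a different one.
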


\begin{proof}

For any $h$ in $\Dc(N)$ we will define $\sigma(h)$ as follows: If $x \in N^{\sigma} \setminus \partial N^{\sigma} $ we define $\sigma(h)(x) = h(x)$. On the boundary $ \partial N^{\sigma}$, we can define ${\sigma}(h)$ as the projectivization (up to positive scale) of the map $Dh(p): T_pN \to T_pN$. It is easy to see that $\sigma(h)$ is continuous.\\

We need to check that $\sigma(h)$ is a smooth map: Suppose that in our coordinate charts $x_1,x_2, \dots ,x_n$ around $p$ in $N$, the diffeomorphism $h$ is given locally by $h= (h_1, \dots ,h_n)$, where $p$ correspond to $0 \in \R^n$ and $h(0) = 0$. Given a  direction $v$ (up to positive scale) at the origin, such as $v = \langle v_1,v_2, \dots ,v_{n-1},1 \rangle$, there exists some $j$ such that $\nabla h_j (p)(v_i , 1) \neq 0$, so we can assume that $\nabla h_j (p)(v_i , 1) > 0$. In our coordinate charts  $\xi_1,\xi_2, \dots ,\xi_{n-1},x_n$ explained previously, we have that $\sigma(h)$ is given by  $\sigma(h) = (h_1^{\sigma},h_2^{\sigma},  \dots ,h_n^{\sigma})$ where:

\[
h^{\sigma}_i(\xi, x_n) = 
  \begin{cases}

{\frac{h_i(x_n \xi , x_n)}{h_j(x_n\xi , x_n)}}  & \text{ if } i \neq j\\

 h_j(x_n\xi, x_n)                   & \text{    if }  i = j\\
  \end{cases}
\]
 
Where $\xi := \langle \xi_1,\xi_2, \dots ,\xi_{n-1} \rangle$ and $x_n \neq 0$.\\

When $x_n = 0$ (in the boundary), we have that:
$$h_i^{\sigma}(\xi, 0) = \underset{x_n \to 0}\lim \frac{h_i(x_n\xi, x_n)}{h_j(x_n\xi, x_n)} = \frac {\nabla h_i(0) \langle\xi, 1\rangle}{\nabla h_j(0) \langle \xi, 1\rangle} $$

Additionally, observe that $\frac{\partial}{\partial x_n} h_j(x_n\xi, x_n) = \nabla h_j(x_n\xi, x_n) (\xi, 1) $ and that ${\nabla h_j(0) (\xi, 1)} \neq 0$. \\

All the previous formulas imply that  the derivatives of any order of ${\sigma}(h)$ exist  for points in the boundary $\partial N^{\sigma}$ and therefore ${\sigma}(h)$ belongs to $\Dc(N^{\sigma})$.\\

We still need to show that $\sigma: \Dc(N,L) \to \D(N^{\sigma})$ is a group homomorphism, more precisely,  we need to show that  for any $f,g \in \Dc(N,L) $, the equality ${\sigma}(fg) = {\sigma}(f){\sigma}(g)$ holds. This equality is obvious for points in $N^{\sigma} \backslash \partial N^{\sigma}$ and it follows from the chain rule for points in $\partial N^{\sigma}$.

\end{proof}

For more details and other aspects about the previous  blow-up construction, see (\cite{KM}, sec 2.5). A related blow-up for diffeomorphisms is studied in great detail in \cite{AK}.

\end{subsection}

\begin{subsection}{Gluing actions along a boundary}\label{parkhe}

Given $N_1$ and $N_2$ two compact manifolds with non-empty boundary, suppose there is a diffeomorphism $\alpha: \partial N^1 \to \partial N^2$. Denote $N = N^{1}\cup N^{2}$ the manifold obtained by gluing $N^{1}$ and $N^{2}$ along the boundary using $\alpha$. Suppose we have actions $\Phi_i: G \to \Dc(N^i)$ in such a way that the actions of every element $ g \in G$ coincide in the common boundary. More precisely, suppose that for each $g \in G$, the equality $\Phi_2(g) = \alpha \circ \Phi_1(g) \circ \alpha^{-1} $ holds for points in $\partial N^2$.\\

In general, if we glue $\Phi_1(g)$ and $\Phi_2(g)$ along the common boundary, we will not obtain a $C^{\infty}$ diffeomorphism of $N$. Nonetheless, the following theorem of Parkhe (See \cite{P}) gives a way to modify the actions in such a way that the resulting glued action of $G$ in our manifold $N$ is a truly $C^{\infty}$ action.

\begin{theorem}\label{kiran}

There are homeomorphisms $\Psi_1$ and $\Psi_2$ of $N^{1}$ and $N^{2}$ with the following property: 

\begin{enumerate}

\item For any $f \in \Dc(N^{i})$,  the map $\Psi_i f \Psi_i^{-1} \in \Dc(N^{i})$
\item If $f_1 \in  \Dc(N^{1})$ and $f_2 \in  \Dc(N^{2})$ coincide in the common boundary, then the diffeomorphism $f$ defined by $\Psi_i f_i \Psi_i^{-1}$ in $N^{i}$ for $i=1,2$ is a $C^{\infty}$ diffeomorphism of $N$.

\end{enumerate}

\end{theorem}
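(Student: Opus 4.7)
The plan is to build $\Psi_i$ as homeomorphisms of $N^i$ that are the identity outside a collar of $\partial N^i$ and inside the collar act by reparametrizing the normal coordinate via a homeomorphism of $[0,\epsilon)$ that is smooth on $(0,\epsilon)$ but only continuous at $0$. The effect of such a reparametrization is that after conjugation any boundary-preserving diffeomorphism of $N^i$ becomes a $C^{\infty}$ diffeomorphism that is infinitely flat at $\partial N^i$ in the normal direction. Once this flatness is in place, the classical smooth gluing criterion takes care of (2) automatically.

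First I would fix smooth collars $C_i \cong [0,\epsilon)\times \partial N^i$ of $\partial N^i$ with $\partial N^i = \{0\}\times \partial N^i$, and choose a homeomorphism $\phi: [0,\epsilon)\to [0,\epsilon)$ such that $\phi$ is a $C^{\infty}$ diffeomorphism on $(0,\epsilon)$, $\phi(t)=t$ near $\epsilon$, $\phi(0)=0$, and $\phi^{-1}$ is infinitely flat at $0$ in the sense that all derivatives $(\phi^{-1})^{(k)}(0)$ vanish for $k\ge 1$. A standard choice is $\phi(t) = -1/\log t$ near $0$, whose inverse is essentially $e^{-1/s}$ and hence has all derivatives vanishing at the origin. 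Define $\Psi_i(t,x) = (\phi(t),x)$ on $C_i$ and the identity elsewhere; it is a homeomorphism of $N^i$ that is smooth away from $\partial N^i$.

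Next I would verify assertion (1) by computing, for any $f_i \in \Dc(N^i)$, the conjugate $g_i := \Psi_i f_i \Psi_i^{-1}$ in collar coordinates. Writing $f_i(t,x) = (T(t,x), X(t,x))$ with $T(0,x)=0$, $\partial_t T(0,x) > 0$, and factoring $T(t,x) = t\,\widetilde T(t,x)$ with $\widetilde T(0,x) > 0$, one gets
\[
g_i(t,x) = \bigl(\phi(\phi^{-1}(t)\,\widetilde T(\phi^{-1}(t),x)),\; X(\phi^{-1}(t),x)\bigr).
\]
Repeated application of the chain rule expresses every partial derivative of $g_i$ as a polynomial in the derivatives of $T$, $X$, $\widetilde T$, $\phi'$ evaluated at $\phi^{-1}(t)$, and the derivatives $(\phi^{-1})^{(k)}(t)$. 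Because the latter vanish to all orders at $t=0$ while the former remain bounded, every $t$-derivative of $g_i$ extends continuously to $\partial N^i$ with value $0$, and the purely tangential derivatives reproduce those of the restriction $f_i^{\partial}$. Thus $g_i \in \Dc(N^i)$ and is infinitely flat at $\partial N^i$ in the normal direction; applying the same analysis to $g_i^{-1} = \Psi_i f_i^{-1} \Psi_i^{-1}$ confirms smoothness of the inverse.

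For assertion (2), suppose $f_1 \in \Dc(N^1)$ and $f_2 \in \Dc(N^2)$ agree along $\alpha$ on the common boundary. Then $\Psi_1 f_1 \Psi_1^{-1}$ and $\Psi_2 f_2 \Psi_2^{-1}$ also agree on the boundary (conjugation does not change the restriction, as $\Psi_i$ is the identity on $\partial N^i$), and by the previous step both are infinitely flat there. Consequently their $k$-jets along $\partial N^1 \cong_\alpha \partial N^2$ match for every $k$, so the piecewise definition produces a $C^{\infty}$ map on $N = N^1 \cup_\alpha N^2$; the same argument applied to the inverses shows it is a diffeomorphism. The main obstacle I expect is the bookkeeping in the chain rule computation of step two: one must track that the factors $(\phi^{-1})^{(k)}(t)$ really beat every polynomial in the derivatives of $T, \widetilde T, X$, and in particular dominate the potentially singular behavior introduced by differentiating $\phi$ at the small argument $\phi^{-1}(t)\widetilde T(\phi^{-1}(t),x)$; the factorization $T = t\widetilde T$ together with Faà di Bruno's formula should make this estimate uniform in $x$.
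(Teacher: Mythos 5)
The paper does not prove this statement; it is quoted from Parkhe \cite{P}, so there is no internal argument to compare against. Your strategy of conjugating by a collar reparametrization $\Psi_i(t,x)=(\phi(t),x)$ with $\phi^{-1}$ infinitely flat is in the spirit of Parkhe's construction, and item (1) does hold for your $\phi$, but item (2) fails: the choice $\phi^{-1}(s)=e^{-1/s}$, $\phi(t)=-1/\log t$, is not flat enough. Two claims in your write-up are incorrect. First, the derivatives of $\phi'$ do \emph{not} "remain bounded": already $\phi'(u)=\frac{1}{u(\log u)^{2}}\to\infty$ as $u\to 0$, and $\phi'$ is evaluated at the small argument $T(\phi^{-1}(t),x)$, so this factor blows up and the chain-rule terms do not cancel automatically against the flat $(\phi^{-1})^{(k)}$. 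Second, "every $t$-derivative of $g_i$ extends continuously to $\partial N^i$ with value $0$" cannot be right, since $g_i$ is a diffeomorphism and its first normal derivative at the boundary must be nonzero.

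Here is the concrete obstruction. Write $T(s,x)=s\,\tilde T(s,x)$ for the normal component of $f_i$ and $a(x)=\tilde T(0,x)>0$ for the normal derivative of $f_i$ along $\partial N^i$. With your $\phi$ one computes
\[
\phi\bigl(T(\phi^{-1}(t),x)\bigr)=\frac{-1}{\log\bigl(e^{-1/t}\,\tilde T(e^{-1/t},x)\bigr)}
=\frac{t}{1-t\log\tilde T(e^{-1/t},x)}
=\frac{t}{1-t\log a(x)}+\bigl(\text{flat in }t\bigr),
\]
so the $\infty$-jet at $t=0$ of the normal component of $g_i$ is the jet of $t/(1-t\log a(x))$, whose second derivative is $2\log a(x)$. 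But $f_1$ and $f_2$ are only assumed to agree as maps on $\partial N^1\cong\partial N^2$; their normal derivatives $a_1(x)$, $a_2(x)$ need not coincide, and when they differ the conjugates $g_1$, $g_2$ have different $2$-jets along the boundary, so the glued map is only $C^1$. To make the jet of $\phi\circ T\circ\phi^{-1}$ independent of the jet of $T$ one needs a much more severe flattening — for instance $\phi^{-1}(s)=e^{-e^{1/s}}$ yields $\phi\circ T\circ\phi^{-1}(t)=t+(\text{flat})$ for every admissible $T$ — and even then the uniformity of the flatness in the tangential variable $x$, and the corresponding smooth invertibility of the glued map, have to be established carefully; this is exactly the technical content of Parkhe's theorem.
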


Therefore, if we define new homomorphisms $\Phi'_i : G \to \Dc(N_i)$ by  $\Phi'_i(g) = \Psi_i \Phi_i(g) \Psi_i^{-1}$, we can glue both $\Phi'_1$ and $\Phi'_2$ to obtain a homomorphism $\Phi': G \to \Dc(N)$.

\end{subsection}
 
\end{section}

\begin{section}{Proof of theorem \ref{1}}\label{proof}

In this section, we will use the results of section \ref{main} together with the facts discussed in section \ref{blowups} to give a proof of Theorem \ref{1}. This result along with Theorem \ref{cont} completes also the proof of Theorem \ref{2}.\\

From now on, we denote $G = \Dc(\R^{n})$. As in section \ref{conti}, if $B$ is an embedded open ball contained in $\R^n$,  we denote: $$G_B = \{ f \in \Dc(M) | \text{ supp}(f) \subset B \}$$ 

In the proof of Theorem \ref{1} we are going to make use of the following basic fact:

\begin{lemma}\label{weakly} 

$\Phi$ is \emph{weakly continuous} if for some ball $B \subset M$ whose closure is a closed ball embedded in $M$, we have that the  restriction $\Phi|_{G_B}$ of $\Phi$ to the subgroup $G_B$ is continuous.

\end{lemma}

\begin{proof}

Conjugating with elements of $\D_c(M)$ we get that for any embedded ball $B'$, the homomorphism $\Phi|_{\Dc(B')}$ is continuous. From the proof of the fragmentation lemma (Lemma 2.1.8 in \cite{B}) , one obtains that if the sequence $f_n$ tending to $\text{Id}$ is supported in a compact set $K$ and $\mathcal{C}$ is a finite covering of $K$ by  sufficiently small balls, then for large $n$, the diffeomorphisms $f_n$ can be written as  products of diffeomorphisms supported in open sets of $\mathcal{C}$. Furthermore, we can achieve this with at most $k$ factors for each open set of $\mathcal{C}$ (for some constant $k$) and it can be done in  such a way  that these diffeomorphisms are also converging to the identity.
\end{proof}

We will proceed to give a sketch of our main result, Theorem \ref{1}.
%%%%%%%

\begin{proof}[Sketch of the proof of Theorem \ref{1}]

The proof  goes by induction in the dimension of $N$. Consider any open ball $B$ whose closure is a closed ball embedded in $M$. By Lemma \ref{weakly}, to show that $\Phi$ is weakly continuous, it is enough to show that $\Phi$ is weakly continuous when we restrict to the subgroup $G_B$.\\ 

We are going to use many times the following fact:  $\Phi$ is weakly continuous if the following condition holds:  There is no sequence $f_n$ in $G_B$, such that $f_n \to \text{Id}$ and $\Phi(f_n) \to A$, where $A$ is a non-trivial isometry in some Riemannian metric on $N$. This fact is an immediate consequence of Lemma \ref{m2}.\\%%%%PUT as fact aside in paper%%%%

 Assume such non-trivial isometry $A$ exists. If we take a ball $B'$ disjoint from $B$, the actions of  $G_{B'}$ and $A$  commute. If $A$ has a non-trivial fixed point set $L$, then $G_{B'}$ preserves $L$ setwise, a manifold of lower dimension than $N$. Therefore, we can blow-up the action at $L$ and show by induction that $\Phi$ is continuous.\\ 
 
 If $A$ acts freely instead, we will obtain an action of $G_{B'}$ on the space $N' :=N/H$, where $H = \overline{\langle A \rangle}$ is the closure of the subgroup generated by $A$. If the group $H$ is infinite, $N'$ is a manifold of lower dimension than $N$ and we can proceed to prove the continuity by induction. If $H$ is finite, we will  show a way to replace $H$ by a similar infinite group of isometries $\hat{H}$ to proceed as in the case where $H$ is infinite.\\
 
 \end{proof}

In the proof of Theorem \ref{1}, we are going to make use of the following known fact:

\begin{lemma}\label{sublie} If $N$ is a closed Riemannian manifold and $H$ is a non-trivial closed connected subgroup of the group of isometries of $N$, then $\text{fix} (H) = \{p \in N \mid h(p) = p \text{ for every } h \in H \}$ is a closed submanifold of positive codimension.

\end{lemma}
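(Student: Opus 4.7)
The plan is to exploit the Lie structure of the isometry group together with the fact that every isometry intertwines the exponential map with its own differential. First I would invoke the theorem of Myers and Steenrod: for a closed Riemannian manifold $N$, the isometry group $\mathrm{Iso}(N)$ is a finite-dimensional Lie group acting smoothly on $N$. Since $H$ is closed in $\mathrm{Iso}(N)$, it is a Lie subgroup and in particular acts smoothly on $N$.

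Next, fix any $p \in \mathrm{fix}(H)$. Each $h \in H$ determines a linear isometry $dh_p \in O(T_pN)$, and I set
$$V_p = \{\, v \in T_pN : dh_p(v) = v \text{ for every } h \in H \,\},$$
a linear subspace of $T_pN$. The key identity is $h \circ \exp_p = \exp_p \circ dh_p$, which holds for every isometry $h$ fixing $p$. Choose $\epsilon > 0$ small enough that $\exp_p$ is a diffeomorphism from the $\epsilon$-ball $B \subset T_pN$ onto an open neighborhood $\mathcal{U}$ of $p$. For $v \in B$, the point $\exp_p(v)$ is $H$-fixed iff $\exp_p(dh_p(v)) = \exp_p(v)$ for every $h$, iff (by injectivity of $\exp_p|_B$) $dh_p(v) = v$ for every $h$. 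Hence $\mathrm{fix}(H) \cap \mathcal{U} = \exp_p(V_p \cap B)$, exhibiting $\mathrm{fix}(H)$ near $p$ as a smooth embedded submanifold of dimension $\dim V_p$. Closedness of $\mathrm{fix}(H)$ in $N$ is immediate from continuity of each $h \in H$.

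For the positive codimension statement (in the application $H$ is nontrivial, which is the only nonvacuous case), suppose by contradiction that $V_p = T_pN$ at some $p \in \mathrm{fix}(H)$. Then every $h \in H$ satisfies $dh_p = \mathrm{Id}$, and by the relation $h \circ \exp_p = \exp_p \circ dh_p$ each such $h$ restricts to the identity on $\mathcal{U}$. A standard rigidity fact for isometries of a connected Riemannian manifold says that an isometry agreeing with the identity on a nonempty open set is the identity everywhere. This forces $H = \{e\}$, contradicting nontriviality. Consequently $V_p \subsetneq T_pN$ at every fixed point of $H$, and every connected component of $\mathrm{fix}(H)$ has positive codimension in $N$.

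The main obstacle, if any, is bookkeeping: verifying that the local models $\exp_p(V_p \cap B)$ assemble into a genuine submanifold, which amounts to checking that $\dim V_p$ is locally constant on $\mathrm{fix}(H)$. This follows from the fact that the collection $\{ dh_q \}_{h \in H}$ varies continuously with $q$, together with the $H$-equivariance of the local model, so the rank of the representation on tangent spaces is locally constant along $\mathrm{fix}(H)$. The connectedness hypothesis on $H$ is not used in an essential way in this argument; it becomes relevant in the applications to ensure that $\mathrm{fix}(H)$ is preserved by the diffeomorphisms commuting with the elements of $H$.
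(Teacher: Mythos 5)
Your proof is correct, and it gives the standard argument via the Myers--Steenrod theorem together with the intertwining identity $h \circ \exp_p = \exp_p \circ dh_p$; note that the paper itself does not prove this lemma but only cites it as a known fact, so there is no in-paper argument to compare against. Two small remarks on your writeup. First, the ``main obstacle'' you flag (local constancy of $\dim V_p$ along $\mathrm{fix}(H)$) is actually automatic from your own local model and needs no separate continuity argument: for $q \in \mathrm{fix}(H)\cap\mathcal U$, the set $\mathrm{fix}(H)$ near $q$ still equals $\exp_p(V_p\cap B)$, which has dimension $\dim V_p$, so $\dim V_q = \dim V_p$; moreover, being an embedded submanifold only requires a flattening chart near each point, with the dimension allowed to differ on different connected components. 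Second, your closing comment that connectedness of $H$ is what guarantees $\mathrm{fix}(H)$ is preserved by commuting diffeomorphisms is not right: if $f$ commutes with every $h\in H$ and $p\in\mathrm{fix}(H)$ then $h(f(p))=f(h(p))=f(p)$, so $\mathrm{fix}(H)$ is $f$-invariant with no connectedness assumption. Connectedness plays no essential role here; what is genuinely needed for positive codimension, and what you correctly identify, is that $H$ be non-trivial, which is guaranteed in the paper's application (Lemma 5.5 explicitly replaces $H$ by a non-trivial closed subgroup with fixed points before invoking this lemma).
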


We will also need the following technical lemma:\\

\begin{lemma}\label{tec}
 If $\Phi: \D_c(B_0) \to \D_c(N') $ is a weakly continuous homomorphism, then there exists an embedded open ball $B_2 \subset B_1$ and a point $p \in N'$ such that for any diffeomorphism $f$ supported in $B_2$, the equality $\Phi(f)p = p$ holds.

\end{lemma}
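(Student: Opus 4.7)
My plan is to apply Lemma~\ref{inj1} to the weakly continuous homomorphism $\Phi: \D_c(B_0) \to \D_c(N')$ and extract a sharp finiteness statement about which shrinking subgroups of $\D_c(B_0)$ move a given point of $N'$. The key observation is that, although Theorem~\ref{cont} classifies $\Phi$ when $\dim(B_0) \ge \dim(N')$, for the present purpose the pointwise Lie-algebra argument of Lemma~\ref{inj1} is enough and handles all dimensions at once.

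For each $p \in N'$, define
\[
T_p := \{\, x \in B_0 : p \in S_x \,\},
\]
with $S_x$ as in Section~\ref{conti}. My first step is to show $|T_p| \le \dim(N')$. Given distinct $x_1, \dots, x_k \in T_p$, choose pairwise disjoint open balls $U_i \ni x_i$ with $U_i \subset B_0$; by the very definition of $p \in S_{x_i}$ (as an intersection over shrinking balls around $x_i$), each subgroup $\Phi(G_{U_i})$ fails to fix $p$. Lemma~\ref{inj1} then gives $k \le \dim(N')$, so indeed $T_p$ is a finite set.

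To finish, fix an arbitrary $p \in N'$. Since $T_p$ is finite while $B_1$ (an embedded open ball in $M$) contains infinitely many points, there is some $x^* \in B_1 \setminus T_p$. Because $p \notin S_{x^*}$, unwinding the definition $S_{x^*} = \bigcap_{r>0} \mathrm{supp}_1(\Phi(G_{B_r(x^*)}))$ yields an $r > 0$ such that every element of $\Phi(G_{B_r(x^*)})$ fixes $p$. Shrinking $r$ further if necessary so that $B_r(x^*) \subset B_1$, we set $B_2 := B_r(x^*)$; then $\Phi(f)p = p$ for every $f$ supported in $B_2$, as required.

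The only nontrivial step is the finiteness of $T_p$, and this is the place where weak continuity is used in an essential way (via Lemma~\ref{inj1}, whose proof invokes Corollary~\ref{flux} and a Frobenius/foliation argument). Everything else is an unpacking of definitions, and no appeal to the classification theorem~\ref{cont} or to any assumption comparing $\dim(B_0)$ to $\dim(N')$ is needed.
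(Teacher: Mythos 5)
Your proof is correct, and the key ingredient --- Lemma~\ref{inj1} --- is exactly the one the paper uses. The paper's own argument is a direct pigeonhole: it fixes $\dim(N')+1$ pairwise disjoint embedded balls in the domain ball; by Lemma~\ref{inj1}, not all of them can have $\Phi(G_{U_i})$ move $p$, so at least one of them serves as $B_2$. Your argument reaches the same conclusion by a slightly longer route: you first use Lemma~\ref{inj1} to bound $|T_p|$ by $\dim(N')$, hence $T_p$ is finite, and then you pick a base point $x^* \notin T_p$ and unwind the definition of $S_{x^*}$ to obtain a small ball around $x^*$ whose image under $\Phi$ fixes $p$. This makes the structure of the \emph{bad} set $T_p$ explicit, and --- as you correctly observe --- both routes avoid the classification theorem and any comparison of $\dim(B_0)$ to $\dim(N')$; but for this particular lemma the extra machinery buys nothing beyond the paper's one-line pigeonhole.
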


\begin{proof}

Define $ n = \dim(N)+1$ and consider an arbitrary point $p \in N$. For $1\leq i \leq n$,  let $U_{i}$ be disjoint embedded balls contained in $B_1$. Using Lemma \ref{inj1}, we conclude that for at least one of these balls, which we denote by $B_2$, the restricted homomorphism $\Phi |_{G_{B_2}}$ fixes $p$. 

\end{proof}

By Lemma \ref{weakly}, Theorem \ref{1} is a direct consequence of the following:

\begin{lemma}\label{3}

Let $N$ be a closed manifold. If $\Phi: {\Dc}(\R^m) \to {\Dc}(N)$ is a group homomorphism, then $\Phi$ is weakly continuous.

\end{lemma}

\begin{proof}

We proceed by induction on the dimension of $N$. The proof for the case $n =0$ is trivial and the case $n=1$ is proved in \cite{M}.\\

Let $B$ de the unit ball in $\R^n$. Suppose that  $\Phi$ is not weakly continuous. By Lemma \ref{m2},  there is a ball $B_0  \subset B$ strictly contained in $B$ and a sequence $f_n \in G_{B_0}$ such that $f_n \to \text{Id}$ and $\Phi(f_n) \to A$, where $A$ is a non-trivial isometry for some Riemmanian metric on $N$. Define $H := \overline{\langle A \rangle}$, $H$ is a closed subgroup of the Lie group of isometries of $N$. We can further suppose that $B_0$ is strictly contained in $B$.

%%throutht??
For the rest of the proof, we let $B_1 \subseteq B$ be an embedded ball disjoint from $B_0$, Observe that as  $B_0$ and $B_1$ are disjoint, the groups $\Phi(G_{B_0})$ and $\Phi(G_{B_1})$ commute.

First we show that we can assume $H$ acts freely:\\

\begin{lemma}\label{part1} If $H$ does not act freely, then $\Phi$ is weakly continuous.
\end{lemma}

\begin{proof}

Suppose that $H$ does not act freely. We can replace our group $H$ with a non-trivial closed subgroup and assume that $L = \text{fix}(H) \neq \emptyset$. By Lemma \ref{sublie}, $L$ is a closed submanifold of positive codimension.\\  %%By possibly replacing $H$ with some non-trivial closed subgroup again, we can assume that the action of $H$ on  $N \backslash L$ is free.\\

Observe that the action of $\Phi(G_{B_1})$ on $N$ commutes with the action of $H$ and therefore $L$ is invariant under $\Phi(G_{B_1})$. Using Lemma \ref{blow3} we can blow-up the action $\Phi$ at $L$ to obtain an action $\Phi^{\sigma}: G_{B_1} \to N^{\sigma}$. This new action preserves the boundary $\partial N^{\sigma} = N(S)$, and therefore we obtained a homomorphism $\Phi^{\sigma}|_{\partial N^{\sigma}}: {\Dc}(\R^m) \to {\D}(\partial N^{\sigma})$ that by induction is weakly continuous.\\  

To prove that $\Phi$ is weakly continuous, we will need to define the following objects: Let $N^1$ and $N^2$ be two copies of $N^{\sigma}$. Additionally, let $N' := N^{1} \cup N^{2}$, be the manifold obtained by gluing $N^{1}, N^{2}$ in the obvious way. Finally, consider the homomorphisms $\Phi_i: G_{B_1} \to \Dc(N^{i})$ equal to $\Phi^{\sigma}$ for $i =1,2$.\\

Observe that the previous homomorphisms define an action of $G_{B_1}$ on $N'$, but this action might not be smooth in the submanifold corresponding to the glued boundaries. Using the gluing theorem described in section \ref{parkhe}, we can conjugate the actions in each $N^i$ by homeomorphisms and obtain a smooth action $\Phi': G_{B_1} \to \Dc(N')$.\\

We are now going to make use of the following trick: We will show that $\Phi'$ is weakly continuous using the induction hypothesis and then we will show that  the continuity of  $\Phi'$ implies the continuity of $\Phi$.\\%%%Change%%%

Let's show that $\Phi': G_{B_1} \to \Dc(N')$ is weakly continuous. By Lemma \ref{m2}, it is enough to show the following: If $f_n \in G_{B_1}$ is a sequence such that  $f_n \to \text{Id}$ and $\Phi'(f_n) \to A$, where $A$ is an isometry for $N'$, then $A$ is trivial. 

By induction, the restriction of $\Phi'$ to the submanifold corresponding to the glued boundaries is continuous,  therefore $A$ is trivial in a codimension one submanifold. As any orientation-preserving isometry that is trivial in a codimension one submanifold is trivial everywhere,  we obtain that $A$ must be trivial.\\

Now we show that the weak continuity of  $\Phi'$ implies  $\Phi$ is weakly continuous as well. Let $f_n$ be a sequence in $G_{B_1}$ such that $f_n \to \text{Id}$ and $\Phi(f_n) \to A$, where $A$ is an isometry of $N$.  From the previous paragraph, we have that $\Phi'(f_n) \to \text{Id}$. This implies that $\Phi^{\sigma}(f_n) \to \text{Id}$ (at least topologically) and therefore $\Phi(f_n) \to \text{Id}$. In conclusion,  $A$ must be trivial and by Lemma \ref{m2}, the homomorphism $\Phi$ is continuous.

\end{proof}

We can now assume that the action of $H$ in $N$ is free. Next, we will prove that if $H$ is infinite, then $\Phi$ is continuous.

\begin{lemma}\label{part2} If $H$ is infinite and acts freely, then $\Phi$ is weakly  continuous.

\end{lemma}

\begin{proof}

Let $B_1$ a ball in $M$ disjoint from $B_0$. The actions of $\Phi(G_{B_1})$ and $H$ on the manifold $N$ commute. If we let $N' := N/H$, then  $N'$ is a smooth manifold because the action is free. The dimension of $N'$ is lower than the dimension of $N$, moreover, we have that  $\Phi$ descends to a homomorphism $\Phi':  G_{B_1} \to \Dc(N')$.\\

By induction, $\Phi'$ is continuous. Let $p \in N'$ be an arbitrary point.  By Lemma \ref{tec}, there is a ball $B_2 \subseteq B_1$ such that $\Phi'(G_{B_2})p = p$. Therefore, $\Phi(G_{B_2})$ preserves the manifold $H_p = \{ h(p) \mid h \in H \}$, furthermore we can assume that the dimension of $H_p$ is lower than the dimension of $N$. In conclusion, we obtain a homomorphism $\Phi'': G_{B_2} \to \Dc(H_p)$ that by induction is continuous.\\

We are going to deduce the weak continuity of $\Phi$ from the continuity of $\Phi'$ and $\Phi''$. By Lemma \ref{m2}, it is enough to show that if $f_n \in G_{B_2}$ is a sequence such that $f_n \to \text{Id}$ and $\Phi(f_n) \to A$, where $A$ is an isometry of $N$, then $A = \text{Id}$.\\

Take a point $q \in H_p$, we are going to show that  $A(q) = q$ and that the derivative $D_q(A)$ of $A$ at $q$ is trivial. Given that $A$ is an isometry of some metric, these two facts are enough to show that $A$ is trivial. The fact that $A(q) = q$, follows from the fact that $\Phi''(f_n) \to \text{Id}$.\\

We now prove that the derivative $D_qA$ is trivial. We can split the tangent space $T_q(N)$ of $N$ at $q$ as follows: $$T_q(N) = T_q(H_p) \oplus W$$ Where $T_q(H_p)$ is the tangent space of $H_p$ at $q$ and $W$ is the orthogonal complement of $T_q(H_p)$ in $T_q(N)$. Observe that as  $\Phi''(f_n) \to \text{Id}$,  then $D_qA$ preserves $T_q(H_p)$ and $D_q|_{T_q(H_p)} = \text{Id}$. In a similar way, considering the fact that $\Phi'(f_n) \to \text{Id}$,  if we let $\pi_W$ denote the orthogonal projection of $T_q(N)$ onto $W$, we have that $\pi_{W}\circ D_qA = \text{Id}$\\

These two facts imply that the linear map $D_q(A$) must be the identity: $D_q(A)$ has a matrix decomposition in blocks, each block corresponding to the subspaces  $T_q(H_p)$ and $W$. In this decomposition, $D_qA$ is an upper triangular matrix and the diagonal blocks are the identity, therefore,  all the eigenvalues of $D_qA$ are equal to one. Since $A$ is an isometry, $D_qA$  is an orthogonal matrix will all its eigenvalues equal to one and therefore $D_qA$ must be the identity.\\ 

In conclusion, we have that $A(q)=q$ and $D_qA = \text{Id}$. As $A$ is an isometry, $A$ must be trivial.\\

\end{proof}

The only remaining case to finish the proof of Theorem \ref{3} is the case where $H$ is a finite group and $H$ acts freely on $N$. We will show that if $\Phi$ were not continuous, we can replace $H$ with an infinite group of isometries $\hat{H}$  that would reduce the proof of this case to the previous two lemmas.\\

\begin{lemma}\label{part3} If $H$ is finite and acts freely, one of the following alternatives hold:

\begin{enumerate} 

\item$\Phi$ is weakly continuous.
\item There exists a metric on $N$ and an infinite  subgroup of isometries $\hat{H}$ of $N$, together with an embedded ball $B' \subseteq \R^{n}$ such that the action of $\Phi(G_{B'})$ commutes with the action of $\hat{H}$.

\end{enumerate}

\end{lemma}

\begin{proof}

Let $N_0:= N$, $H_0:= H$ and $N_1 := N_0/H_0$. The homomorphism $\Phi$ descends to a homomorphism $\Phi_1: G_{B_1} \to \D(N_1)$ as the groups $G_{B_1}$ and $H_0$ commute with each other. One can check that if $\Phi_1$ is continuous, then $\Phi$ is continuous. Therefore, if $\Phi_1$ is not  continuous, there is a sequence $f_{n,1} \in G_{B_1}$ such that $f_{n,1} \to \text{Id}$ and such that $\Phi_1(f_{n,1}) \to A_1  \in \D(N_1)$, where $A_1$ is a non-trivial isometry on some metric on $N_1$.  Let $H_1 = \overline{\langle A_1\rangle}$. If $H_1$ does not act freely  or $H_1$ is infinite, we can use lemmas \ref{part1} and \ref{part2} to conclude that $\Phi_1$ is continuous, therefore we can assume $H_1$ is finite and fixed point free.\\ 

We are now in the same situation we were at the beginning of the proof of this lemma. For that reason, we can repeat this procedure infinitely many times as follows:\\

We construct  for each integer $k$, the following objects: A ball $B_k \subset B$ disjoint from the balls $B_j$ for $j < k$, a closed manifold $N_k = N_{k-1}/H_{k-1}$ and a group homomorphism $\Phi_k: G_{B_{k}} \to {\D}(N_{k})$ descending from $\Phi_{k-1}$. Together with a sequence $\{f_{n,k}\} \in G_{B_k} $ such that as $n\to \infty$, we have that $f_{n,k} \to \text{Id}$ and  $\Phi_k(f_{n,k}) \to A_k$, where $A_k$ is a non-trivial isometry on some metric on $N_k$ and the group $H_k = {\langle A_k\rangle}$ is a non-trivial finite group of diffeomorphisms acting freely on $N_{k}$. \\

Observe that the only way this procedure can't be repeated infinitely times is if for some $k$, the homomorphism  $\Phi_k$ were weakly continuous, but this would imply that $\Phi$ is weakly continuous, therefore we can assume the previous objects are defined for every integer $k \geq 1$. Next, we are going to show how to get an infinite group of isometries for $N$ from our $H_k$'s.  Let's define: $$\hat{H_k} = \{ h \in {\Dc}(N) \mid h \text{ is a lift of some diffeomorphism } h_k \in H_k \}$$ 

Each diffeomorphism $h_k$ of $H_k$ can be lifted to a diffeomorphism of $N$:  The diffeomorphism $A_k$ is the limit of the sequence  $\Phi_k(g_{n,k})$, we consider the sequence $\{\Phi(g_{n,k})\}$ that consist of lifts of $\Phi_k(g_{n,k})$, by Lemma \ref{m2} this sequence has a convergent subsequence converging to a diffeomorphism $A'_k$ of $N$ which is necessarily a lift of $A_k$.\\

Observe that in fact $\hat{H_k}$ is a finite subgroup of  ${\Dc}(N)$ and that $\hat{H}_{k-1} \subset \hat{H_k}$, being this last inclusion strict.\\

We define the group $\hat{H} := \bigcup_{k\geq0} \hat{H_k}$. Observe that $\hat{H}$ is necessarily an infinite group. We will prove that $\hat{H}$ is invariant under a Riemannian metric on $N$ in a similar way that we prove Lemma \ref{m2}. In order to do that, we will show the following bounds:\\

\begin{lemma}\label{bounds}
Every  $h \in \hat{H}$ satisfies the following bounds:

\begin{enumerate}
\item $\|h\|_r \leq C_r$ for every $r \geq 1$. 
\item $\|D(h)\| \leq C$.
\end{enumerate}
 For the constants $C, C_r$ in Lemma \ref{m1} 
\end{lemma}

\begin{proof}

We will prove by induction in $k$ that for every $h \in \hat{H_k}$, there exists a sequence $h_n \in G_B$ such that $h_n \to \text{Id}$ and $\Phi(h_n) \to h$. By lemma \ref{m1}, this is enough to show that such bounds in the derivatives hold.\\

This statement is obvious for $k= 0$. Suppose it is true for $j \leq k-1$. Consider the projection homomorphism  $\Psi: \hat{H_k} \to H_k $ and observe that the kernel of $\Psi$ is exactly $\hat{H}_{k-1}$.\\

By definition $H_k = \langle A_k \rangle$. First, we will show that $A_k$ has a lift to $N$ belonging to $\hat{H_k}$ and satisfying our induction hypothesis. Consider the sequence $f_{n,k} \in G_{B_k} $ such that as $n \to \infty$, we have that $f_{n,k} \to \text{Id}$ and  $\Phi_k(f_{n,k}) \to A_k$  as in the construction of the $A_k$'s. We have that for every $n$, the diffeomorphism $\Phi(f_{n,k})$ is a lift of $\Phi_k(f_{n,k})$ to $\Dc(N)$. Using Lemma \ref{m2}, we can find a subsequence  $f_{{n_i},k}$ of $f_{{n},k}$, such that $\Phi(f_{{n_i},k}) \to A_k'$, where $A_k'$ is a diffeomorphism of $N$.  The diffeomorphism  $A_k'$ is necessarily a lift of $A_k$ and therefore $A_k'$ belongs to $\hat{H_k}$ and satisfies our induction hypothesis. \\

To finish the induction, observe that as $H_k$ is cyclic, every element of $\hat{H_k}$ is a product of an element of $\text{ker}(\Psi) = \hat{H}_{k-1}$ and a power of $A_k'$. Therefore, to conclude that every element of $\hat{H_k}$ satisfies the induction hypothesis, it is enough to show that if $f,g \in \hat{H_k}$ satisfy the induction hypothesis, then $fg$ satisfies the hypothesis too. This is easy to prove, if there  exist sequences $\{f_n\}, \{g_n\}$ such that $f_n \to \text{Id}$, $g_n \to \text{Id}$ and such that $\Phi(f_n) \to f$, $\Phi(g_n) \to g$, then the sequence $\{f_ng_n\}$ satisfies that $f_ng_n \to \text{Id}$  and $\Phi(f_ng_n) \to fg$.

\end{proof}

To conclude the proof of Lemma \ref{part3}, we would proceed in a similar way as we did in Lemma \ref{m2}, we will show that $\hat{H}$ preserves a Riemannian metric $g$ in $N$ as follows:  Take an arbitrary metric $g'$ for $N$ and average $g$ with respect to the groups $\hat{H_n}$. More precisely, consider the sequence of metrics:
 
 $$g_n = \frac{1}{|\hat{H_k}|} \underset{h \in \hat{H_k}}\sum h^{*}(g')$$
 
Observe that $g_n$ is invariant under $\hat{H_n}$.  Therefore, if there is a  subconvergent sequence of $g_n$ converging to a metric $g$, then $g$ is invariant under $\hat{H}$. To ensure the existence of such subconvergent subsequence using the same argument used in the proof of Lemma \ref{m2} taking into account the bounds obtained in Lemma \ref{bounds}. In conclusion, $\hat{H}$ is an infinite group of isometries of $N$. To finish the proof of Lemma \ref{part3},  take an embedded ball $B' \subset B$ in $N$ disjoint from all the balls $B_n$ and apply Lemma \ref{part2}.

\end{proof}

\end{proof}

\end{section}

 \begin{section} {Questions and Remarks.}\label{remarks}
 
 %\begin{subsection}{Another categories of Diff}
 
% gkjkjhljkk
 %\end{subsection}

 \begin{subsection}{Higher dimensions}

Maybe the most natural question to ask in view of Theorem \ref{2} is wether it is possible to obtain a characterization of homomorphisms $\Phi: \Dc(M) \to \Dc(N)$ in the case that $\dim(M) < \dim(N)$. All the known homomorphisms known to the author are built from pieces,  each piece coming from some natural bundles over $M$ or over $Symm_k(M)$ (the set of unordered $k$ points in $M$). These bundles could be products, coverings, or somewhat similar to the tangent bundle or other bundles where one has some kind of linear action in the fiber. The pieces are glued along submanifolds where the actions in both sides agree.

 \end{subsection}

\begin{subsection}{Lower regularity}

One might expect similar results for the groups $\Dc^{r}(M)$ of $C^{r}$ diffeomorphisms of a manifold $M$. Some of the techniques here might apply to such problems in the case when $r \geq 2$ or $r \geq 3$. One of the reasons to believe it is possible to do so is that $\Dc^{\infty}(M) \subset \Dc^{r}(M)$ and  therefore one might be able to show some sort of continuity for homomorphisms of the type $\Phi: \Dc^{\infty}(M) \to \D^{r}(M)$ using Militon's theorem.\\ 

It is important to point out  that Militon's theorem might be very difficult to generalize to the $C^{r}$ category: One of the main steps in the proof of Militon's Theorem uses a KAM technique and such techniques typically have a loss of regularity if $r < \infty$. In the topological category things seem to be much more difficult as every homeomorphim $f \in \text{Homeo}_0(M)$ is \emph{arbitrarily distorted} (See \cite{CF}).
 
 \end{subsection}

 \begin{subsection}{Discrete homomorphisms between Lie groups}
 
One might also try to understand discrete homomorphisms between simple Lie groups. For example, one can show that any discrete homomorphism from $SO(3)$ to itself is conjugate to the standard one. The same is true for $SL_2(\R)$. For $SL_2(\C)$, the situation is a little bit different,  one can use a non trivial field automorphism of $\C$, to get a homomorphism from $Sl_2(\C)$ to itself that is not conjugate to the standard one. Nonetheless, one can show that any of the homomorphism between $Sl_2(\C)$ and itself come from this construction.\\

An observation worth mentioning is that if the homomorphism $\Phi$ is measurable, then $\Phi$ is continuous and in fact $C^{\infty}$ (See \cite{Z}, Ap. B.3). Therefore, a non standard homomorphism is necessarily not measurable (as in the examples for $SL_2(\C)$ described above).\\
 
Other more difficult set of questions related to the Zimmer program (see \cite{FIS}) come from asking wether a discrete homomorphism of groups from a simple lie group $G$ to $\D_c(M)$ comes from a standard embedding. For example, Matsumoto\cite{MAT} shows that any action of $PSL_2(\R)$ in $\Dc(S^1)$ is conjugate to the standard action. One might wonder if that is also the case for $S^2$ and $PSL_3(\R)$: \\

 \begin{subsubsection}{Question} Does every homomorphism $ \Phi: PSL_3(\R) \to \D(S^2)$ is conjugate to the standard embedding?\\
 \end{subsubsection}

Using the fact that $PSL_2(\C)$ acts in $S^{2}$  and $SO(3) \subset PSL_2(\C)$ one can compose the action of $ PSL_2(\C)$ by a non trivial field automorphism of $\C$ to obtain a non standard action of $SO(3)$ in $S^2$, nonetheless one can still ask if the following is true in the volume preserving setting:

 \begin{subsubsection}{Question} Does there exist a homomorphism $\Phi: SO(3) \to {\D_{\mu}}(S^2)$ that is not conjugate to the standard one?\\
  \end{subsubsection} 
 
For this type of questions, the technique used here does not seem to work as the distortion elements in any linear group are at most exponentially distorted. Nevertheless, studying the distorted elements involved might give some useful information.

 \end{subsection}
 
  \begin{subsection}{Distortion elements in groups of diffeomorphisms}
  
 It is worth pointing out that Militon's theorem implies that if $M$ is a closed manifold and $f$ is an infinite order isometry on $M$ then $f$ is arbitrarily distorted. The converse to this statement is not true, there are examples of diffeomorphisms $f$ in $\Dc(M)$ that are not isometries but  are still recurrent ($f$ is recurrent if satisfies $\liminf_n d_{C^{\infty}}(f^n,\text{Id}) = 0$) and therefore arbitrarily distorted as a consequence of Militon's theorem. Some of these examples can be constructed using the Anosov-Katok method. For example, one can use the construction in \cite{Her} to obtain such examples.
 
 \begin{subsubsection}{Question} Is it possible to obtain a classification of all possible distorted (or arbitrarily distorted as defined in \cite{CF}) elements in $\Dc(S^1)$ or $\Dc(S^2)$?\\
 
 \end{subsubsection}
 
This might be useful to show that certain discrete groups can't act by diffeomorphisms on $S^1$ and $S^2$.

 \end{subsection}

 \end{section}

\end{document}